\title{\textbf{The McKay correspondence, tilting equivalences, and rationality}}
\author{
Morgan Brown
	\footnote{University of Miami, Coral Gables, Florida. {\sf m.brown@math.miami.edu}}, and
Ian Shipman
	\footnote{University of Utah, Salt Lake City, Utah. {\sf ian.shipman@gmail.com}}
}
\def\Hilb{\mathrm{Hilb}}
\def\wtt{\wt{\theta}}
\def\pp{{\prime \prime}}
\newcommand*\patchAmsMathEnvironmentForLineno[1]{%
  \expandafter\let\csname old#1\expandafter\endcsname\csname #1\endcsname
  \expandafter\let\csname oldend#1\expandafter\endcsname\csname end#1\endcsname
  \renewenvironment{#1}%
     {\linenomath\csname old#1\endcsname}%
     {\csname oldend#1\endcsname\endlinenomath}}%
\newcommand*\patchBothAmsMathEnvironmentsForLineno[1]{%
  \patchAmsMathEnvironmentForLineno{#1}%
  \patchAmsMathEnvironmentForLineno{#1*}}%
\begin{document}
\maketitle

\begin{abstract}
We consider the problem of comparing t-structures under the derived McKay correspondence and for tilting equivalences.  We relate the t-structures using certain natural torsion theories.  As an application, we give a criterion for rationality for surfaces with a tilting bundle.  In particular we show that every smooth projective surface which admits a full, strong, exceptional collection of line bundles is rational.
\end{abstract}


\section{Introduction}
The McKay correspondence is a beautiful body of work comparing the representation theory of finte subgroups $G < \SL_n(\C)$ with the geometry of crepant resolutions of the singular quotient $\C^n/G$.  In this article we will study the approach to the McKay correspondence via derived categories (see \cite{KV,BKR}).  If $G < \SL_n(\C)$ where $n = 2,3$, then for each crepant resolution $Y \to \C^n/G$, there is a natural exact equivalence $\bD(\C^n)^G \cong \bD(Y)$ where $\bD(\C^n)^G$ is the bounded derived category of $G$-equivariant coherent sheaves on $\C^n$.  There has been much interest recently in understanding which objects of $\bD(Y)$ correspond to natural objects in $\bD(\C^n)^G$.  For example, in \cite{CCL,BCQV} the authors calculate the cohomology of objects corresponding to the equivariant sheaves $\cO_0 \tensor V$ on $\C^3$, where $V$ is an irreducible representation of $G$ and $G$ is abelian, and relate the results to certain combinatorics for describing the Hilbert scheme of $G$-orbits in $\C^3$ known as Reid's recipe.

We provide an alternative analysis of the equivalences appearing the derived McKay correspondence.  Instead of computing the derived McKay correspondence for particular objects, we instead describe the entire category $\coh(Y)$ as a full subcategory of $\bD(\C^n)^G$.  In general, given an equivalence $\Phi:\bD(X) \to \bD(\cat{A})$, where $X$ is a smooth projective variety and $\cat{A}$ is an Abelian category of interest, we can use $\Phi$ to identify $\coh(X)$ with a full subcategory $\Phi(\coh(X))$ of $\bD(\cat{A})$.  Then one can ask: what is the relationship between $\Phi(\coh(X))$ and $\cat{A}$?  

If the cohomology of every object of $\Phi(\coh(X))$ were concentrated in degrees $0$ and $1$ then the category $\Phi(\coh(X))$ is completely determined by a torsion pair in $\cat{A}$ and a construction known as tilting.  For the equivalences that show up in the McKay correpsondince in dimensions two and three, the cohomologies of objects corresponding to sheaves on $Y$ are in degrees 0, 1, and 2.  We extend the classical tilting picture by describing how to relate $\coh(X)$ and $\cat{A}$ by performing two tilts.  In order to state our results, we recall the definition of torsion pair.

\begin{definition*}
Let $\cat{A}$ be an Abelian category.  A pair $(\bT,\bF)$ of full subcategories is called a torsion pair if 
\begin{enumerate}
\item $\Hom(t,f) = 0$ for all $t \in \bT$ and $f \in \bF$ and
\item every $a \in \cat{A}$ fits into a short exact sequence $0 \to a' \to a \to a'' \to 0$ where $a' \in \bT$ and $a'' \in \bF$.
\end{enumerate}
\end{definition*}  

Suppose that $\Phi:\bD(\cat{A}) \to \bD(\cat{B})$ is an equivalence with the property that $\sfH^i(\Phi(a)) =0$ for all $a \in \cat{A}$ and $i \neq 0,1$.  Then we can define a torsion pair in $\cat{B}$ as follows \cite{BeRe}.  Put $\bF = \Phi(\cat{A}) \cap \cat{B}$ and $\bF = \Phi(\cat{A})[-1] \cap \cat{B}$.  Moreover, a torsion pair can be used to construct a new Abelian category \cite{HRS}, which in the case of the torsion pair defined above, recovers $\Phi(\cat{A})$ (as the upper tilt):

\begin{definition*}
Let $(\bT,\bF)$ be a torsion pair in an Abelian category $\cat{B}$.  The \emph{upper tilt} of $\cat{B}$ with respect to $(\bT,\bF)$ is the full subcategory $\cat{B}'$ of $\bD(\cat{B})$ consisting of objects $b^\bt$ such that $\sfH^0(b^\bt) \in \bF$, $\sfH^1(b^\bt) \in \bT$, and $\sfH^i(b^\bt) = 0$ for $i \neq 0, 1$.  We define the \emph{lower tilt} to be $\cat{B}'[1]$.
\end{definition*}

We now come to the definition that makes it possible to give uniform statements of our results.  The following definition is inspired by recent work on Bridgeland stability conditions (see \cite{BMT,BBMT,AB}, for example).  Let $\cat{A}$ be an Noetherian Abelian category.  A \emph{weak central charge} on $\cat{A}$ is a homomorphism $\sfZ:K_0(\cat{A}) \to \C$ such that for all $a \in \cat{A}$, (i) $\mathrm{Im}(\sfZ(a)) \geq 0$ and (ii) if $\mathrm{Im}(\sfZ(a)) = 0$ then $\mathrm{Re}(\sfZ(a)) \geq 0$.  Given a weak central charge $\sfZ=\theta + i \psi$ on $\cat{A}$, for any object $a \in \cat{A}$ there a maximal subobject $a_0 \subset a$ such that $\psi(a_0)=0$.  Now, we can form a torsion pair $(\bT_{\sfZ},\bF_{\sfZ})$ in $\cat{A}$ whose torsion part is 
\[ \bT_Z = \{ a \in \cat{A} : \text{for every quotient $a \onto a''$, either $a'' = a''_0$ or $\theta(a''/a''_0) > 0$} \}.\] 
We find that variants of this construction show up naturally in the study of derived equivalences in low dimension.

\begin{definition*}
The upper and lower tilts with respect to a weak central charge are defined to be the upper and lower tilts with respect to $(\bT_\sfZ,\bF_\sfZ)$.  
\end{definition*}

Using this definition we now consider the derived McKay correspondence in dimensions $2$ and $3$. Let $\theta:K_0(\C G) \to \Z$ be a homomorphism such that $\theta(\C G) =0$.  This can be used to define a notion of (semi)stability for finite length $G$-equivariant sheaves on $\C^n$.  A finite length $G$-equivariant sheaf $\cF$ on $\C^n$ is semistable if for any subsheaf $\cF' \subset \cF$, $\theta( \rmH^0(\cF') ) \geq \theta( \rmH^0(\cF) )$, where their global sections are viewed as representations of $G$.  A finite length $G$-equivariant sheaf is called a $G$-constellation if $\rmH^0(\cF) \cong \C G$ as $G$-representations.  The fine moduli space $\cM_\theta$ of $\theta$-stable $G$-constellations exists and if it is nonempty, it is a crepant resolution of $\C^n/G$.  Moreover, every crepant resolution arises this way (see \cite{BKR,CI}).  In addition, if $\cF \in \coh(\C^n \times \cM_\theta)$ is the universal $\theta$-stable $G$-constellation the Fourier-Mukai transform $\Phi_{\cF}:\bD(\cM_\theta) \to \bD(\C^n)^G$ is an equivalence.  We describe the relationship between $\coh(\cM_\theta)$ and $\coh(\C^n)^G$ using a torsion pair $(\bT_\theta,\bF_\theta)$ where
\[ \bT_\theta = \{ \cF : \text{ $\theta(\rmH^0(\cG))>0$ for all nonzero, finite length, $G$-equivariant quotients $\cF \onto \cG$} \}.\] 
One can think of $(\bT_\theta,\bF_\theta)$ as the maximal way to extend to $\coh(\C^n)^G$ the torsion pair in the category of finite length $G$-equivariant sheaves on $\C^n$ defined by the weak central charge $\sfZ = -\theta\circ\rmH^0 + i \ell$, where $\ell$ is the length function.  See Thm \ref{thm:2dMcKay} and Thm \ref{thm:main.McKay} for details. 

\begin{introthm}{A} \label{intro.McKay}
Let $\Phi:\bD(\cM_\theta) \to \bD(\C^n)^G$ be the Fourier-Mukai equivalence defined by the universal $\theta$-stable $G$-constellation.  There is a tilt $\cat{A}'$ of $\coh(\cM_\theta)$ that $\Phi$ identifies with the tilt of $\bD(\C^n)^G$ with respect to $(\bT_\theta,\bF_\theta)$.
\end{introthm} 

We also consider tilting equivalences.  In this situation, we obtain similar results but under an additional hypothesis.  In addition to describing the structure of a tilting equivalence, we give a new approach for establishing that varieties with a tilting bundle have Kodaira dimension $-\infty$.

A tilting bundle $\cE$ on a smooth projective variety $X$ is a vector bundle such that $\bR^i \Hom(\cE,\cE) = 0$ for $i \neq 0$ and (ii) $\cE$ generates the derived category of $X$.  Let $\sfA = \End(\cE)$ and regard $\cE$ as a sheaf of $\sfA$-modules.  In this situation there are mutually inverse equivalences
\[ 
\xymatrix{
\bD(X) \ar@<2pt>[r]^{\bR\Hom(\cE,?)} & \ar@<2pt>[l]^{? \tensor^{\bL} \cE} \bD(\sfA) 
}
\]
There is also a notion of stability for $\sfA$-modules.  Let $\theta:K_0(\sfA) \to \Z$ be a homomorphism.  Then a finite dimensional $\sfA$-module $M$ is $\theta$-semistable if for any submodule $M' \subset M$, $\theta(M') \geq \theta(M)$.  We say that $\theta$ is \emph{compatible} with $\cE$ if for any $p \in X$, $\Hom(\cE,\cO_p)$ is a $\theta$-stable $\sfA$-module.  See Theorem \ref{thm:main.tilting} for a more detailed version of the following.

\begin{introthm}{B} \label{intro.tilting}
Let $X$ be a smooth projective surface and $\Phi:\bD(\sfA) \to \bD(X)$ be a tilting equivalence.  Assume that there exists a $\theta$ compatible with $\cE$.  Then $X$ is rational and there are weak central charges on $\rmod\sfA$ and $\coh(X)$ whose tilts are identified by $\Phi$.
\end{introthm}

Unfortunately it is difficult to check the hypotheses of Thm B except in specific examples that are known already to be rational. But we were able by other means to establish rationality in the case where the surface has a tilting bundle which is a direct sum of line bundles, which is equivalent to having a full strong exceptional collection of line bundles. 

\begin{introthm}{C}\label{intro.linebundles}[Thm \ref{thm:linebundles}]
Any smooth projective surface which admits a full, strong, exceptional collection of line bundles is rational.

\end{introthm}
Finally, we would like to point out that one can obtain similar results for Fourier-Mukai equivalences between surfaces.  For example, in \cite{Hu2}, Huybrechts considers derived-equivalent K3 surfaces $X$ and $Y$.  He shows that after making a suitable choice of equivalence $\bD(X) \cong \bD(Y)$ there is a numerical tilt of $\coh(X)$ which is indentified with a numerical tilt of $\bD(Y)$.  

\subsection*{Acknowledgements}
We would like to thank Pieter Belmans, Alistair Craw, Bill Fulton, Lutz Hille, Alistair King, Robert Lazarsfeld, Emanuele Macr\`{i}, Markus Perling, and Nick Proudfoot for interesting conversations. We also thank Katrina Honigs and Jason Lo for comments and corrections. We are especially grateful to Emanuele Macr\`{i} for making us aware of the work of Huybrechts \cite{Hu2}. The second author would like to thank the Mathematical Sciences Research Institute for providing a wonderful atmosphere in which to work during part of this project.  During the completion of this work, the first author was partially supported by NSF RTG grant DMS-0943832, and the second author was partially supported by the NSF award DMS-1204733.  

\section{Preliminaries}
Let $\bk$ be an algebraically closed field of characteristic zero.  We make the standing assumption that the Abelian or triangulated categories under consideration are $\bk$-linear.  Furthermore, we adopt the convention that a full subcategory is closed under isomorphism of objects.

Suppose that $\cat{A}$ is an Abelian category.  Then we write $\bD(\cat{A})$ for the bounded derived category of $\cat{A}$.    While $\bD(\cat{A})$ is a triangulated category, it also has extra structure coming from the fact that it is a derived category; it carries a t-structure.  (See \cite{BBD} for details.)

\begin{definition}\label{def:tstructure}
Let $\bD$ be a triangulated category.  A \emph{t-structure} on $\bD$ is a pair $\alpha = (\cU, \cV)$ of full subcategories of $\bD$ such that
\begin{enumerate}
\item $\cU[1] \subset \cU$ and $\cV \subset \cV[1]$,
\item if $x \in \cU$ and $y \in \cV$ then $\Hom(x[1],y) = 0$,
\item for any $x \in \bD$ there exists a triangle
\[ \tau^{\leq 0} x \to x \to \tau^{> 0}x \to \tau^{\leq 0} x [1]\] 
where $\tau^{\leq 0} x \in \cU$ and $\tau^{> 0}x[1] \in \cV$.
\end{enumerate}
If $\alpha = (\cU,\cV)$ is a t-structure on $\bD$ then we define $\bD^{\leq n}_\alpha = \cU[-n]$ and $\bD^{\geq n}_\alpha = \cV[-n]$.
\end{definition}

Let $\bD$ be a triangulated category with a t-structure $\alpha = (\cU,\cV)$.  The subcategories $\cU$ and $\cV$ define one another.  Indeed, $\cV = (\cU[1])^\perp$, the full subcategory of objects $y \in \bD$ such that for any $x \in \cU$, $\Hom(x[1],y) = 0$.  Similarly, $\cU ={}^\perp(\cV[-1])$.  The full subcategory $\cA_\alpha = \cU \cap \cV$ is an Abelian category called the \emph{heart} of the t-structure.  Exact triangles in $\bD$ give rise to long exact sequences in the heart of a t-structure.  To see how, we observe that condition (3) in Definition \ref{def:tstructure} is equivalent to the existence of a left adjoint $\tau^{\leq 0}$ to the inclusion of $\cU \into \bD$, which is in turn equivalent to the existence of a right adjoint $\tau^{\geq 0}$ of $\cV \into \bD$.  So there is a functor $\sfH^0:\bD \to \cA_\alpha$ defined by $\sfH^0(x) = \tau^{\geq 0} \tau^{\leq 0}(x)$.  We define $\sfH^i(x) = \sfH^0(x[i]) \in \cA_\alpha$.  If 
\[ x' \to x \to x^\pp \to x[1]\] 
is an exact triangle in $\bD$ then applying $\sfH^0$ we get a long exact sequence
\[ \dotsm \to \sfH^0(x') \to \sfH^0(x) \to \sfH^0(x^\pp) \to \sfH^1(x') \to \dotsm \]
If we wish to emphasize the t-structure we will indicate it with a subscript, e.g. $\sfH^i_\alpha, \tau^{\leq 0}_\alpha, \tau^{>0}_\alpha,$ etc.

There is a natural t-struture on $\bD(\cat{A})$ where $\cU = \bD^{\leq 0}(\cat{A})$ and $\cV = \bD^{\geq 0}(\cat{A})$ are the full subcategories of objects quasi isomorphic to complexes supported in non-postive and non-negative cohomological degrees, respectively.  This is known as the \emph{standard} t-structure.  The standard t-structure on the bounded derived category satisfies an important finiteness condition.  If $\alpha$ is a t-structure on $\bD$ then we say it is \emph{bounded} if for any $x \in \bD$ there is an $n \geq 0$ such that $x \in \cU[-n] \cap \cV[-n]$.  The standard t-structure on $\bD(\cat{A})$ is bounded.  Furthermore, the natural inclusion $\cat{A} \to \bD(\cat{A})$ identifies $\cat{A}$ with the heart of the standard t-structure.  

\begin{rmk}
If $\alpha$ is a bounded t-structure on $\bD$, it is not neccessarily true that $\bD$ is equivalent to $\bD(\cA_\alpha)$.
\end{rmk}

Suppose that $\Phi:\bD(\cat{A}) \to \bD(\cat{B})$ is an exact equivalence.  Then the standard t-structure on $\bD(\cat{A})$ induces a t-structure on $\bD(\cat{B})$.  
\begin{definition} We denote the t-structure on $\bD(\cat{B})$ induced by $\Phi$ by $\alpha_\Phi = (\cU_\Phi,\cV_\Phi)$ where $x \in \cU_\Phi$ if and only if $x \cong \Phi(x')$ where $x' \in \bD^{\leq 0}(\cat{A})$ (and similarly for objects of $\cV_\Phi$).
\end{definition}
Since $\Phi$ equips $\bD(\cat{B})$ with two t-structures, one way to study $\Phi$ is to compare these t-structures.  In this general setting, the relationship could be very complicated.  However, for equivalences arising in algebraic geometry it is possible to describe their relationship precisely in dimensions one and two.  In order to describe this relationship, we need the notion of a torsion pair.  We recall the definition from the introduction.
\begin{definition} \label{def:torsion_pair}
Let $\cat{A}$ be an Abelian category.  A \emph{torsion pair} in $\cat{A}$ is a pair $(\bT,\bF)$ of full subcategories such that
\begin{enumerate}
\item for each $t \in \bT$ and $f \in \bF$, $\Hom(t,f) = 0$, and
\item each $a \in \cat{A}$ fits into an exact sequence $0 \to a^\prime \to a \to a^\pp \to 0$ where $a^\prime \in \bT$ and $a^\pp \in \bF$.
\end{enumerate}
\end{definition}

If $\bD$ is a triangulated category with a t-structure, then a torsion pair in its heart determines a new t-structure.  More precisely, we have the following theorem.
\begin{thm}[\cite{HRS}] \label{thm:HRS}
Let $\bD$ be a triangulated category with a t-structure $\alpha = (\cU,\cV)$ and suppose that $\pi=(\bT,\bF)$ is a torsion pair in its heart $\cA_\alpha$.  Then the pair $\alpha_\pi = (\cU_\pi,\cV_\pi)$ defined by
\[ \cU_\pi = \{ x \in \cU[-1] : \sfH^1_\alpha(x) \in \bT \}\] 
and
\[ \cV_\pi = \{ x \in \cV : \sfH^{0}_\alpha(x) \in \bF \}\] 
is a t-structure on $\bD$.
\end{thm}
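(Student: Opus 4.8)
The plan is to verify the three axioms of Definition \ref{def:tstructure} for the pair $\bD_\pi^\bt$ directly, using freely the truncation functors $\tau^{\leq n}, \tau^{\geq n}$ and cohomology objects $\sfH^i$ of the ambient t-structure $\bD^\bt$, together with two standard facts: that $\Hom(\bD^{\leq a},\bD^{\geq b}) = 0$ whenever $a < b$ (an immediate consequence of axiom (2) for $\bD^\bt$, obtained by shifting), and that for objects of the heart the group $\Hom_\bD$ agrees with $\Hom_{\bD^\hrt}$. I will also use that $0$ lies in both $\bT$ and $\bF$.

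Axiom (1) is immediate. Since $\bD^{\leq 0}[1] \subset \bD^{\leq 0}$, the subcategories $\bD^{\leq n}$ increase with $n$, so if $x \in \bD^{\leq 1}$ then $x[1] \in \bD^{\leq 0} \subset \bD^{\leq 1}$; moreover $\sfH^1(x[1]) = \sfH^2(x) = 0 \in \bT$, giving $\bD_\pi^{\leq 0}[1] \subset \bD_\pi^{\leq 0}$. The inclusion $\bD_\pi^{\geq 0} \subset \bD_\pi^{\geq 0}[1]$ is dual: if $y \in \bD^{\geq 0}$ with $\sfH^0(y) \in \bF$, then $y[-1] \in \bD^{\geq 0}$ and $\sfH^0(y[-1]) = \sfH^{-1}(y) = 0 \in \bF$.

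For axiom (2), I must show $\Hom(w,y) = 0$ whenever $w = x[1]$ with $x \in \bD_\pi^{\leq 0}$ and $y \in \bD_\pi^{\geq 0}$; unwinding the definitions, $w \in \bD^{\leq 0}$ with $\sfH^0(w) \in \bT$, while $y \in \bD^{\geq 0}$ with $\sfH^0(y) \in \bF$. I will peel off cohomology at both ends. Applying $\Hom(-,y)$ to the truncation triangle $\tau^{\leq -1} w \to w \to \sfH^0(w)$ reduces the problem to $\Hom(\sfH^0(w),y) = 0$, since $\Hom(\tau^{\leq -1} w, y) = 0$ by orthogonality. Applying $\Hom(\sfH^0(w),-)$ to the triangle $\sfH^0(y) \to y \to \tau^{\geq 1} y$ further reduces this to $\Hom(\sfH^0(w),\sfH^0(y)) = 0$, since $\Hom(\sfH^0(w),\tau^{\geq 1} y) = 0$ again by orthogonality. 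Finally this last group equals $\Hom_{\bD^\hrt}(\sfH^0(w),\sfH^0(y))$, which vanishes because $\sfH^0(w) \in \bT$, $\sfH^0(y) \in \bF$, and the two halves of a torsion pair are orthogonal.

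Axiom (3) is the heart of the argument and where the real work lies. Given $x \in \bD$, I will manufacture the decomposition triangle by splitting the cohomology object $\sfH^1(x)$ through the torsion pair. Writing $H = \sfH^1(x) \in \bD^\hrt$, let $0 \to t \to H \to f \to 0$ be its torsion sequence, with $t \in \bT$ and $f \in \bF$. Starting from the ambient triangles $\tau^{\leq 0} x \to x \to \tau^{\geq 1} x$ and $H[-1] \to \tau^{\geq 1} x \to \tau^{\geq 2} x$, I apply the octahedral axiom twice. Let $C$ be the cone of the composite $t[-1] \to H[-1] \to \tau^{\geq 1} x$; the octahedron for this composite yields a triangle $f[-1] \to C \to \tau^{\geq 2} x$, while by construction there is a triangle $t[-1] \to \tau^{\geq 1} x \to C$. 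A cohomology computation then gives $C \in \bD^{\geq 1}$ with $\sfH^1(C) = f \in \bF$, so $C \in \bD_\pi^{\geq 1}$. Taking $A$ to be the fiber of the composite $x \to \tau^{\geq 1} x \to C$ produces a triangle $A \to x \to C$, and a second octahedron, applied to $x \to \tau^{\geq 1} x \to C$, exhibits $A$ in a triangle $\tau^{\leq 0} x \to A \to t[-1]$, whence $A \in \bD^{\leq 1}$ with $\sfH^1(A) = t \in \bT$, i.e.\ $A \in \bD_\pi^{\leq 0}$. This is exactly the required decomposition, with $\tau_\pi^{\leq 0} x = A$ and $\tau_\pi^{>0} x = C$. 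The main obstacle is purely organizational: keeping the two octahedra consistent and reading off the cohomology of $A$ and $C$ correctly from the resulting long exact sequences; once the triangles are in place, the membership conditions follow mechanically.
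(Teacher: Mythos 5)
Your proof is correct. The paper gives no proof of this theorem, citing it to \cite{HRS}, and your argument is precisely the standard one from that source (also found in Bridgeland and Woolf): axioms (1) and (2) by direct unwinding and orthogonality, and axiom (3) by splitting $\sfH^1(x)$ along the torsion pair and assembling the truncation triangle $A \to x \to C$ with two applications of the octahedral axiom — with the indexing correctly adapted to the paper's ``upper tilt'' convention, in which the new heart has $\sfH^0 \in \bF$ and $\sfH^1 \in \bT$.
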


There is a certain ambiguity in the definition of the t-structure $\alpha_\pi$, for we could just as easily have defined it so that the objects in the heart have nonzero cohomology only in cohomological degrees 0 and 1 rather than -1 and 0.  So we will simply absorb this ambiguity in our terminology.  Let $\alpha, \beta$  be two t-structures.  We say that $\alpha$ is a \emph{tilt} of $\beta$ if there exists an $n \in \Z$ such that for any $\sfH^i_{\alpha}(x) =0$ for any $x \in \cA_{\beta}$ and $i \neq n,n+1$.  If $\alpha$ is a t-structure, let us denote by $\alpha[n]$ the t-structure $(\cU[n], \cV[n])$.  Woolf points out the following connection between torsion pairs and t-structures related by tilts \cite[Prop. 2.1]{Wo}.

\begin{lem}\label{lem:onetilt}
A t-structure $\alpha$ is a tilt of $\beta$ if and only if there is a torsion pair $\pi$ in $\cA_{\alpha}$ such that $\alpha = \beta_\pi$ up to a shift.
\end{lem}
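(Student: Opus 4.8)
The plan is to prove both implications by comparing the heart $\cat{C} := \bD_2^\hrt$ of the tilted t-structure with the heart $\bD_1^\hrt$ of the original one, using the truncation functors and cohomology functors $\sfH^i_1, \sfH^i_2$ of \emph{both} t-structures. The tools I will use repeatedly are that a bounded t-structure is recovered from its aisle, that an aisle $\bD^{\leq 0}$ is closed under extensions, under the shift $[1]$, and under cones of morphisms between its objects, and the orthogonality $\bD^{\geq n} = (\bD^{\leq n-1})^\perp$. For the easy implication, suppose $\bD_2^\bt = \bD_\pi^\bt[m]$ for a torsion pair $\pi = (\bT,\bF)$ in $\bD_1^\hrt$ and some $m \in \Z$. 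Intersecting the two subcategories of Theorem \ref{thm:HRS}, the heart $\bD_\pi^\hrt$ consists exactly of objects $x$ with $\sfH^i_1(x) = 0$ for $i \neq 0,1$, with $\sfH^0_1(x) \in \bF$ and $\sfH^1_1(x) \in \bT$; in particular its objects have $\bD_1$-cohomology concentrated in the two consecutive degrees $0$ and $1$. The shift $[m]$ moves these to degrees $-m$ and $1-m$, so taking $n = -m$ exhibits $\bD_2^\bt$ as a tilt of $\bD_1^\bt$.

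The substance is the converse. After replacing $\bD_2^\bt$ by a shift I may assume $n = 0$, so that $\cat{C} \subset \bD_1^{\geq 0} \cap \bD_1^{\leq 1}$. The key preliminary is a sandwiching of aisles: since (by boundedness) every object of $\bD_2^{\geq 0}$ is a finite extension of shifts $\cat{C}[-k]$ with $k \geq 0$, all lying in $\bD_1^{\geq 0}$, and every object of $\bD_2^{\leq 0}$ is a finite extension of shifts $\cat{C}[k]$ with $k \geq 0$, all lying in $\bD_1^{\leq 1}$, I obtain $\bD_2^{\geq 0} \subset \bD_1^{\geq 0}$ and $\bD_2^{\leq 0} \subset \bD_1^{\leq 1}$. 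Passing to perps gives the two complementary inclusions $\bD_1^{\leq 0} \subset \bD_2^{\leq 0}$ and $\bD_1^{\geq 1} \subset \bD_2^{\geq 0}$, the latter of which shifts to $\bD_1^{\geq 0} \subset \bD_2^{\geq -1}$. With these four inclusions in hand I define $\bT = \cat{C}[1] \cap \bD_1^\hrt$ and $\bF = \cat{C} \cap \bD_1^\hrt$.

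To see that $(\bT,\bF)$ is a torsion pair, the vanishing $\Hom(t,f) = 0$ follows by shifting: $\Hom(t,f) = \Hom(t[-1],f[-1])$, where $t[-1] \in \cat{C}$ and $f \in \cat{C}$, and morphisms from the heart $\cat{C}$ into $\cat{C}[-1]$ vanish. For the decomposition of an arbitrary $a \in \bD_1^\hrt$, the inclusions above place $a$ in $\bD_2^{\leq 0} \cap \bD_2^{\geq -1}$, so its $\bD_2$-truncation triangle is $\sfH^{-1}_2(a)[1] \to a \to \sfH^0_2(a) \to \sfH^{-1}_2(a)[2]$ with outer terms in $\cat{C}[1]$ and $\cat{C}$. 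Taking the long exact sequence of $\bD_1$-cohomology and using that $a$ is concentrated in $\bD_1$-degree $0$ forces both outer terms into $\bD_1^\hrt$, hence into $\bT$ and $\bF$ respectively, and identifies $0 \to \sfH^{-1}_2(a)[1] \to a \to \sfH^0_2(a) \to 0$ as the required torsion sequence.

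It remains to identify $\bD_\pi^\bt$ with $\bD_2^\bt$, for which I show $\bD_2^{\leq 0} \subseteq \bD_\pi^{\leq 0}$ and $\bD_2^{\geq 0} \subseteq \bD_\pi^{\geq 0}$; taking perps then forces equality. For $x \in \bD_2^{\leq 0} \subset \bD_1^{\leq 1}$, the object $\sfH^1_1(x)[-1]$ is a cone of a morphism between objects of $\bD_2^{\leq 0}$ and simultaneously lies in $\bD_1^{\geq 1} \subset \bD_2^{\geq 0}$, hence lies in $\cat{C}$; thus $\sfH^1_1(x) \in \bT$ and $x \in \bD_\pi^{\leq 0}$. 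Symmetrically, for $x \in \bD_2^{\geq 0} \subset \bD_1^{\geq 0}$ the term $\sfH^0_1(x)$ arises as an extension inside $\bD_2^{\geq 0}$ and also lies in $\bD_1^{\leq 0} \subset \bD_2^{\leq 0}$, hence in $\cat{C}$, giving $\sfH^0_1(x) \in \bF$ and $x \in \bD_\pi^{\geq 0}$. I expect the main obstacle to be exactly this bookkeeping in the converse: the whole argument rests on translating ``$\cat{C}$ sits in $\bD_1$-degrees $0$ and $1$'' into the four aisle inclusions and then repeatedly recognizing extracted $\bD_1$-cohomology objects as elements of $\cat{C}$ via the cone/extension closure of the aisles, all while keeping the truncations and shift conventions consistent.
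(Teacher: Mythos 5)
Your proof is correct, and it is essentially the standard argument for this lemma, which the paper does not prove itself but cites from Woolf \cite[Prop.\ 2.1]{Wo}: one sets $\bT = \cat{C}[1] \cap \bD_1^\hrt$ and $\bF = \cat{C} \cap \bD_1^\hrt$, verifies the torsion-pair axioms via the $\bD_2$-truncation triangle of an object of $\bD_1^\hrt$, and identifies the tilted t-structure with $\bD_2^\bt$ by nesting of aisles and orthogonality, exactly as you do. The only point worth flagging is that your Postnikov-tower step and the passage from cohomology vanishing to $\cat{C} \subset \bD_1^{\geq 0} \cap \bD_1^{\leq 1}$ use boundedness (or nondegeneracy) of both t-structures, which the lemma as stated leaves implicit but which holds in every situation where the paper applies it.
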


We will show that tilting and McKay equivalences in dimensions two and three induce pairs of t-structures on $\bD(X)$ that are related by at most two tilts.  For a tilting equivalence, it is possible to show this on abstract grounds (see \cite{Lo}).  However, it turns out that one can give an explicit description of the tilts.  They have appeared as the first of two tilts in several works on Bridgeland stability conditions (see \cite{AB,BMT,BBMT}, following \cite{Br08}).  These torsion pairs are defined using Mumford's slope stability.  There is a similar notion, King stability \cite{Ki}, for objects in finite length categories.  We will work in a framework, a weakened form of Bridgeland's stability conditions, that allows us to treat both of these uniformly.

Let $\bD$ be a triangulated category with a bounded t-structure $\alpha$.  Then the natural inclusion $\cA_\alpha \to \bD$ induces an isomorphism of Grothendieck groups $K_0(\cA_\alpha) \to K_0(\bD)$.  An \emph{additive} function on an Abelian category is a homomorphism from its Grothendieck group to an Abelian group, which will be $\C$ in this paper.  Now, we recall the definition of weak central charge from the introduction:

\begin{definition}
Let $\cat{A}$ be a Noetherian abelian category.  An additive $\C$-valued function $\sfZ$ on $\cat{A}$ is a weak central charge if
\begin{enumerate}
\item for all $a \in \cat{A}$, $\mathrm{Im} (\sfZ(a)) \geq 0$, and
\item if $\mathrm{Im}(\sfZ(a)) = 0$ then $\mathrm{Re}(\sfZ(a)) \geq 0$.
\end{enumerate} 
\end{definition}

Let $\bD$ be a triangulated category with a t-structure $\alpha$ whose heart is Noetherian and suppose that $\sfZ = \theta + i \psi$ is a weak central charge on $\cA_\alpha$.  Polishchuk pointed out that if a full subcategory of a Noetherian category is closed under quotients and extensions then it is the torsion part of a torsion pair \cite{Po}.  We note that since $\psi$ is non-negative, the full subcategory $\cat{A}_0$  of $\cat{A}$ consisting of objects $a$ with $\psi(a) = 0$ is a Serre subcategory.  In particular, there is a torsion pair $(\bT_\psi,\bF_\psi)$ such that 
\[ \bT_\psi = \{ a \in \cat{A} : \psi(a) = 0 \}\] 
We use this to construct a torsion pair $\pi_\sfZ = (\bT_\sfZ,\bF_\sfZ)$.  The torsion part is defined by
\[\bT_\sfZ = \{ x \in \cat{A} : \text{ for every quotient $x \onto x^\pp$, either $x^\pp_{\bF_\psi} = 0$ or $\theta( x^\pp_{\bF_\psi} ) > 0$  } \}.\] 
By Theorem \ref{thm:HRS}, $\pi_\sfZ$ determines a t-structure on $\bD$, which we denote by $\alpha_\sfZ$. 
\begin{definition}
We say that a t-structure $\beta$ is a Harder-Narasimhan tilt, or HN tilt, of $\alpha$ if there exists a weak central charge $\sfZ$ on $\cA_\alpha$ such that $\beta = \alpha_{\sfZ}$ up to a shift.
\end{definition}

\begin{rmk}
Suppose that $\alpha$ is a t-structure on $\bD$ and that $\pi$ is a torsion pair in $\cA_\alpha$.  Even if $\cA_\alpha$ is Noetherian there is no reason for $\cA_{\alpha_\pi}$ to be Noetherian.  However, one can show that if $\pi$ is the torsion pair derived from a weak central charge then $\cA_{\alpha_\pi}$ is in fact Noetherian.
\end{rmk}

The first examples of weak central charges come from notions of stability in abelian categories. We first consider King stability: Suppose that $\sfA$ is a finite dimensional associative $\bk$-algebra.  We write $\rmod\sfA$ for the category of finite dimensional right $\sfA$-modules.  Then $K_0(\rmod\sfA)$ is freely generated by the finitely many isomorphism classes of simple $\sfA$-modules.  

Now let $\theta$ be any additive function $K_0(\rmod\sfA)\to \mathbb{R}$. If $M \in \rmod\sfA$ satisfies $\theta(M)=0$, we say that $M$ is $\theta$-semistable in the sense of King \cite{Ki} if for any submodule $M' \subset M$, $\theta(M')\leq 0$, and $M$ is $\theta$-stable if $\theta(M')<0$ whenever $M'$ is a nonzero proper submodule of $M$.

Note that we have adopted the opposite sign convention from King. This is so we can treat categories of modules and sheaves in a uniform manner.

Let $\ell: K_0(\rmod\sfA)\to \mathbb{R}$ be the length function, and for $s\in \mathbb{R}$, set $\theta_s(M)=\theta(M)-s \ell(M)$. Then we say that an arbitrary finitely generated module $M$ is $\theta$-(semi)stable if and only if it is $\theta_s$-(semi)stable for $s = \theta(M)$, as then $\theta_s(M)=0$. Now, for any $s$, we can define a weak central charge $\sfZ_s = \theta_s + i \ell$. This allows us to assign a slope $\mu_s$ to a nonzero object $M$ by the formula $\mu_s(M)=\frac{\theta_s(M)}{\ell(M)}=\frac{\theta_s(M)}{\ell(M)}-s$, and we also write $\mu(M)$ for $\mu_0(M)$. Using the composition series for $M$, we have a unique Harder-Narisimhan filtration

\[ 0=M_0 \subset \dotsm \subset M_j = M\] 
where each $M_i/M_{i-1}$ is $\theta$-semistable, and
\[ \mu_{max}(M) = \mu_0(M_1/M_0) > \dotsm > \mu_0(M_j/M_{j-1}) = \mu_{min}(M).\] 

The two subcategories comprising the torsion pair $(\bT_\sfZ,\bF_\sfZ)$ can be defined purely in terms of this filtration. A nonzero object $M$ is in the torsion part $\bT_\sfZ$ if $\mu_{min}(M)>0$, and $M$ is in the free part $\bF_\sfZ$ if $\mu_{max}(M) \leq 0$.

This notion of stability is closely related to GIT; the function $\theta_s$ corresponds to a character of a linear algebraic group acting on a space of quiver representations, and a module is $\theta$-(semi)stable if and only the associated representation is (semi)stable in the corresponding GIT problem \cite{Ki}.

A similar phenomenon occurs for the category of coherent sheaves on an algebraic variety: Consider a smooth projective variety $X$ of dimension $n$ with an ample line bundle $\cL$. Then for any $s \in \R$ we can produce an additive function $\theta_s: K_0(X) \to \mathbb{R}$ such that for a coherent sheaf $\cE$, $\theta_s(\cE) = c_1(\cE) \cdot c_1(\cL)^{n-1} - s \cdot \rk(\cE)$. Then $\sfZ_s = \theta_s + i \rk$ is a weak central charge on $\coh(X)$ because if $\rk(\cE)=0$, then $\cE$ is torsion and the intersection number $c_1(\cE) \cdot c_1(\cL)^{n-1}=0$.

As in the case of King stability, the central charge allows us to define a slope by 
\[\mu_s(\cE)=\frac{\text{Re}(Z_s(\cE))}{\text{Im}(Z_s(\cE)}=\frac{c_1(\cE) \cdot c_1(\cL)^{n-1}}{\rk{\cE}}.\]
This is well defined for torsion-free sheaves, and when $s=0$ we recover the usual definition of $\mu$, the slope of a torsion-free sheaf with respect to an ample divisor. A torsion free sheaf $\cE$ is slope semi-stable if $\mu(\cE') \leq \mu(\cE)$ for any subsheaf $\cE' \subset \cE$.  Now, it is well known that a torsion-free sheaf admits a unique Harder-Narasimhan filtration
\[0=\cE_0 \subset \dotsm \subset \cE_r = \cE\] 
where $\cE_{i+1}/\cE_i$ are torsion free semistable sheaves and 
\[ \mu_{max}(\cE) = \mu(\cE_1/\cE_0) > \dotsm > \mu(\cE_r/\cE_{r-1}) = \mu_{min}(\cE).\] 
In fact $\cE$ is semistable if and only if the filtration is given by $0 \subset \cE$, which holds if and only if $\mu_{max}(\cE)=\mu_{min}(\cE)$.
Then $(\bT_{\sfZ_s},\bF_{\sfZ_s})$ can be described as follows.  A sheaf $\cE$ belongs to $\bT_{\sfZ_s}$ if and only if it is torsion or $\mu_{min}(\cE^\pp) >s$, where $\cE^\pp$ is the maximal torsion-free quotient of $\cE$.   Similarly, a sheaf $\cE$ belongs to $\bF_{\sfZ_s}$ if and only if it is torsion free and $\mu_{max}(\cE) \leq s$.

We now come to the technical heart of the paper.  We consider an equivalence $\Phi:\bD(X) \to \bD(\cat{A})$.  It is convenient to use the notation $\Phi^i(\cF^\bt) = \sfH^i(\Phi(\cF^\bt))$ for $\cF^\bt \in \bD(X)$.  Recall that $\Phi$ is \emph{left exact} if $\Phi^i(\cF) = 0$ for all $i < 0$ and $\cF \in \coh(X)$.  We will restrict our attention to functors that satisfy the standard vanishing theorems.  For a sheaf $\cF \in \coh(X)$ let $\dim(\cF)$ denote the dimension of the support of $\cF$.
\begin{definition}\mbox{}
\begin{enumerate}
\item We say that $\Phi$ \emph{satisfies Grothendieck vanising (GV)} if $\Phi^i(\cF) = 0$ for $i > \dim(\cF)$.
\item We say that $\Phi$ \emph{satisfies Serre vanishing (SV)} if for any ample line bundle $\cL$ and sheaf $\cF$ there is an $n_0$ such that if $n > n_0$, $\Phi^i(\cF \tensor \cL^{\tensor n}) = 0$ for $i > 0$.
\end{enumerate}
\end{definition}

Note that if $\Phi$ is left exact and satisfies (GV) then $\Phi(\cO_p) \in \cat{A}$ for all $p \in X$.  The following criterion will allow us to understand the relationship between $\Phi(\coh(X))$ and $\cat{A}$ if we understand the objects $\Phi(\cO_p)$.  Let $W \subset X$ be a closed subset and denote by $\bD_W(X)$ the derived category of coherent sheaves supported on $W$.  Note that $\bD_W(X)$ is naturally a full subcategory of $\bD(X)$ and $\bD_X(X) = \bD(X)$.
\begin{lem} \label{lem:main}
Let $X$ be a quasiprojective variety, $W \subset X$ a closed set, and $\Phi:\bD_W(X) \to \bD(\cat{A})$ be a left exact equivalence that satisfies (GV) and (SV).  Suppose that $(\bT,\bF)$ is a torsion pair in $\cat{A}$ such that
\begin{enumerate}
\item for every point $p \in W$, $\Phi(\cO_p) \in \bF$, and 
\item if $\cF$ is a zero-dimensional sheaf supported on $W$ and $\Phi^0(\cF) \onto x$ is a surjection with $x \in \bF$ nonzero then there is a $p \in X$ and a nonzero map $x \onto \Phi^0(\cO_p)$.
\end{enumerate}
Then for a sheaf $\cE$ supported on $W$ we have $\Phi^0(\cE) \in \bF$ and $\Phi^m(\cE) \in \bT$ where $m = \dim(\cE)$.
\end{lem}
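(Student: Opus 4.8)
The plan is to argue by induction on $m=\dim(\cE)$, using Serre vanishing to cut $\cE$ down by a hyperplane and reduce to lower-dimensional supports. Fix an ample $\cL$ and set $\cE(n):=\cE\tensor\cL^{n}$, and phrase the inductive hypothesis as: the conclusion holds for all sheaves supported on $W$ of dimension $<m$. After reducing to the case that $\cE$ is pure of dimension $m$ (a general sheaf is handled by its filtration by dimension of support, together with the long exact sequences and the fact that $\bF$ and $\bT$ are closed under extensions and subobjects), a general section $s\in H^{0}(\cL^{n})$ is a nonzerodivisor on $\cE$ and gives a short exact sequence $0\to\cE\to\cE(n)\to\cE_D(n)\to 0$, the first map being multiplication by $s$, with $\dim(\cE_D(n))=m-1$. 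Choosing $n\gg 0$ so that (SV) yields $\Phi^{i}(\cE(n))=0$ for $i>0$, the long exact sequence becomes $0\to\Phi^{0}(\cE)\to\Phi^{0}(\cE(n))\to\Phi^{0}(\cE_D(n))\to\Phi^{1}(\cE)\to 0$, together with isomorphisms $\Phi^{i}(\cE)\cong\Phi^{i-1}(\cE_D(n))$ for $i\geq 2$. The base $m=0$ of the $\bF$-claim is immediate: a zero-dimensional $\cE$ has a finite filtration with quotients $\cO_{p}$, $p\in W$, so $\Phi^{0}(\cE)$ is an iterated extension of the objects $\Phi(\cO_{p})\in\bF$ supplied by hypothesis (1), hence lies in $\bF$.

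I would treat the torsion claim $\Phi^{m}(\cE)\in\bT$ first, since it closes up cleanly. For $m\geq 2$ the isomorphism $\Phi^{m}(\cE)\cong\Phi^{m-1}(\cE_D(n))$ and the inductive hypothesis applied to $\cE_D(n)$ give the claim at once. The base case $m=1$ is where hypothesis (2) enters: here the long exact sequence exhibits $\Phi^{1}(\cE)$ as a quotient of $\Phi^{0}(\cE_D(n))$ with $\cE_D(n)$ zero-dimensional. If $\Phi^{1}(\cE)$ had a nonzero torsion-free quotient $\Phi^{1}(\cE)\onto y$ with $y\in\bF$, then $y$ would be a nonzero $\bF$-quotient of $\Phi^{0}(\cE_D(n))$, so by (2) there is a nonzero map $y\onto\Phi(\cO_{p})$, and hence a nonzero $\phi\colon\Phi^{1}(\cE)\to\Phi(\cO_{p})$. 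Precomposing $\phi[-1]$ with the truncation $\Phi(\cE)\to\tau^{\geq 1}\Phi(\cE)=\Phi^{1}(\cE)[-1]$ gives a map $\Phi(\cE)\to\Phi(\cO_{p})[-1]$ corresponding under $\Phi^{-1}$ to an element of $\Hom_{\bD}(\cE,\cO_{p}[-1])=0$; factoring through the truncation triangle then forces $\phi$ to factor through $\mathrm{Ext}^{-2}_{\cat{A}}(\Phi^{0}(\cE),\Phi(\cO_{p}))=0$, so $\phi=0$, a contradiction. Thus $\Phi^{1}(\cE)\in\bT$.

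For the torsion-free claim $\Phi^{0}(\cE)\in\bF$ I would extract from the same sequence that the torsion subobject of $\Phi^{0}(\cE)$ is insensitive to twisting. Indeed $\Phi^{0}(\cE)\into\Phi^{0}(\cE(n))$ has cokernel a subobject of $\Phi^{0}(\cE_D(n))\in\bF$ (the latter by the inductive hypothesis, as $\dim(\cE_D(n))=m-1$); since any map from an object of $\bT$ to an object of $\bF$ vanishes, the torsion subobjects of $\Phi^{0}(\cE)$ and $\Phi^{0}(\cE(n))$ coincide and equal a fixed $t\in\bT$ for all $n\gg 0$. By (SV) the sheaf $\Phi(\cE(n))=\Phi^{0}(\cE(n))$ already lies in $\cat{A}$, so the problem reduces to showing this object is torsion-free for $n\gg 0$, i.e. that $t=0$.

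The main obstacle is exactly this last step: the torsion $t$ is invisible to the point-objects, since $\Hom(t,\Phi(\cO_{p}))=0$ by (1), and it dies in every lower-dimensional slice $\Phi^{0}(\cE_D(n))$, so neither hypothesis detects it directly. Translating the inclusion $t\into\Phi(\cE(n))$ across the equivalence produces a nonzero map $\Phi^{-1}(t)\to\cE(n)$ in $\bD_{W}(X)$ for all $n\gg 0$. If one knew that $\Phi^{-1}(t)\in\bD^{\leq-1}_{W}(X)$, then the only contributions to $\Hom_{\bD}(\Phi^{-1}(t),\cE(n))$ would be higher groups $\mathrm{Ext}^{\geq 1}(\sfH^{j}\Phi^{-1}(t),\cE(n))$, which vanish for $n\gg 0$ by Serre vanishing, forcing $t=0$. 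So the crux is the dual containment $\Phi^{-1}(\bT)\subseteq\bD^{\leq-1}_{W}(X)$ — the half of the tilting relationship not governed by the point hypotheses. I expect to establish this by first using (GV) and (SV) to confine the transported heart $\Phi^{-1}(\cat{A})$ to a bounded range of cohomological degrees in $\bD_{W}(X)$, and only then using the definition of $(\bT,\bF)$ to place $\bT$ in the strictly negative part; it is this degree bound for $\Phi^{-1}$, rather than any single computation, where I anticipate the real difficulty.
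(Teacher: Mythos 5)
Your handling of the torsion half is correct, and your truncation-triangle argument is in fact cleaner than the paper's at the base case $m=1$: the paper rules out a nonzero map $\Phi^1(\cE)_\bF \to \Phi^0(\cO_{p_0})$ by varying the section $s$ until $p_0 \notin \supp(\cF_s)$ and using $\Hom(\cF_s,\cO_{p_0})=0$, whereas your computation shows unconditionally that $\Hom_{\cat{A}}(\Phi^1(\cE),\Phi^0(\cO_p))=0$, using only $\Hom_{\bD_W(X)}(\cE,\cO_p[-1])=0$ and $\Ext^{-2}_{\cat{A}}(\Phi^0(\cE),\Phi^0(\cO_p))=0$. Two small blemishes: the preliminary reduction to pure sheaves is unnecessary (a general section of $\cL^{\tensor n}$, $n \gg 0$, avoids the finitely many associated points of \emph{any} coherent sheaf, so is already injective on $\cE$; the paper proceeds this way), and as stated it is invalid for the $\bT$-claim, since from $0 \to \cE' \to \cE \to \cE'' \to 0$ with $\dim(\cE')<m$ one only gets an inclusion $\Phi^m(\cE) \into \Phi^m(\cE'')$, and $\bT$ is not closed under subobjects.

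The genuine gap is in the torsion-free half, and you have located it yourself: the containment $\Phi^{-1}(\bT) \subseteq \bD^{\leq -1}_W(X)$ is not an auxiliary degree bound extractable from (GV), (SV), and the axioms of a torsion pair --- it is \emph{equivalent} to the statement $\Phi^0(\cE)\in\bF$ being proved. Granting the containment, for $t \in \bT$ one has $\Hom_{\cat{A}}(t,\Phi^0(\cE)) \into \Hom_{\bD(\cat{A})}(t,\Phi(\cE)) = \Hom(\Phi^{-1}(t),\cE) = 0$ by t-structure orthogonality alone (your appeal to Serre vanishing here is redundant: $\Hom(x,y)=0$ whenever $x \in \bD^{\leq -1}$ and $y \in \bD^{\geq 0}$). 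Conversely, if $\Phi^0(\cE) \in \bF$ for every sheaf $\cE$ supported on $W$, then $\Hom(\Phi^{-1}(t),\cE) = \Hom(t,\Phi^0(\cE)) = 0$ for every such $\cE$; since left exactness already gives $\Phi^{-1}(\cat{A}) \subseteq \bD^{\leq 0}_W(X)$, a nonzero $\sfH^0(\Phi^{-1}(t))$ would yield a nonzero map from $\Phi^{-1}(t)$ to a sheaf, so $\Phi^{-1}(t) \in \bD^{\leq -1}_W(X)$. Thus your plan --- ``first confine the transported heart, then place $\bT$ strictly negative'' --- is circular: the confinement $\Phi^{-1}(\cat{A}) \subseteq \bD^{\leq 0}$ is the easy half and follows from left exactness, but the strictly-negative half carries the entire content of the lemma and cannot come from the abstract definition of $(\bT,\bF)$ together with hypotheses (1) and (2) without further geometric input.

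The paper's proof supplies exactly the missing mechanism, which is absent from your outline: it pushes the torsion subobject into the finite-length regime by twisting \emph{down} rather than up. From $0 \to \cE\tensor\cL^{\tensor -N} \to \cE \to \cF_s\tensor\cL^{\tensor -N} \to 0$ and the inductive hypothesis $\Phi^0(\cF_s\tensor\cL^{\tensor -N}) \in \bF$, the inclusion $\Phi^0(\cE)_\bT \to \Phi^0(\cE)$ factors through $\Phi^0(\cE\tensor\cL^{\tensor -N})$ for all $N \gg 0$ and all general $s$. Applying $\Phi^{-1}$ and using $\Phi^{-1}(\Phi^0(\cE)_\bT) \in \bD^{\leq 0}_W(X)$, one obtains a sheaf map $\sfH^0(\Phi^{-1}(\Phi^0(\cE)_\bT)) \to \cE$ whose image lies in every subsheaf $s(\cE\tensor\cL^{\tensor -N}) \subseteq \cE$; by the Krull intersection theorem the intersection of these images is the maximal finite-length subsheaf $\cE' \subseteq \cE$, so $\Phi^0(\cE)_\bT \to \Phi^0(\cE)$ factors through $\Phi^0(\cE') \in \bF$ and hence vanishes. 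Your up-twisting observation that the torsion subobject $t$ is stable under twisting is true but inert --- it never moves $t$ toward the zero-dimensional sheaves where hypothesis (1) and the base case have force --- whereas the down-twisting plus Krull step is precisely that move.
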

\begin{proof}
We fix an ample line bundle $\cL$ on $X$.  By assumption, if $p \in X$, then $\Phi(\cO_p) = \Phi^0(\cO_p) \in \bF$. Suppose that $\cE$ is supported on a subscheme of dimension $m > 0$.  Then there is an $N > 0$ and a section $s:\cO \to \cL^{\tensor N}$ such that $\cE \to \cE \tensor \cL^{\tensor N}$ is injective and its cokernel $\cF_s$ satisfies $\dim(\cF_s) = m-1$.  Moreover, we can choose $N$ large enough that $\Phi^i(\cE \tensor \cL^{\tensor N}) = 0$ for $i > 0$ and thus $\Phi^{m-1}(\cF_s) \to \Phi^m(\cE)$ is surjective.  Since $\bT$ is closed under quotients, to show $\Phi^m(\cE) \in \bT$ it suffices to show this when $\dim(\cE)=1$.   

So suppose that $\dim(\cE)=1$.  Let $\cE^\pp$ be the maximal quotient of $\Phi^1(\cE)$ lying in $\bF$.  Assume for a contradition that $\cE^\pp \neq 0$.  Once again for some $N \gg 0$ a general section $s:\cO_X \to \cL^{\tensor N}$ gives rise to an exact sequence 
\[ 0 \to \cE \to \cE \tensor \cL^{\tensor N} \to \cF_s \to 0.\] 
such that the boundary map $\Phi^0(\cF_s) \to \Phi^1(\cE)$ is surjective.  So $\cE^\pp$ is a quotient of $\Phi^0(\cF_s)$ and, by hypothesis, there is a $p_0 \in X$ and a nonzero morphism $\cE^\pp \to \Phi^0(\cO_{p_0})$. However, we can vary the section $s$ so that $p_0$ does not belong to the support of $\cF_s$.  The map $\Phi^0(\cF_s) \to \Phi^0(\cO_{p_0})$ is nonzero, yet it is induced by a map $\cF_s \to \cO_{p_0}$ that must be zero since $p_0$ is not in the support of $\cF_s$.  We conclude that $\Phi^1(\cF) \in \bT$.

We now turn to the assertion that $\Phi^0(\cE) \in \bF$ and proceed by induction on $\dim(\cE)$.  Consider the exact sequence 
\[ 0 \to \Phi^0(\cE \tensor \cL^{\tensor -N}) \to \Phi^0(\cE) \to \Phi^0( \cF_s \tensor \cL^{\tensor -N} ).\] 
Let $\cE^\prime \subset \Phi^0(\cE)$ be the maximal subobject in $\bT$.  Since $\Phi^0(\cF_s \tensor \cL^{\tensor -N}) \in \bF$ by induction we see that $\cE^\prime \to \Phi^0(\cE)$ must factor through $\Phi^0(\cE \tensor \cL^{\tensor -N})$.  Since $\Phi(\cE)$ is concentrated in non-negative cohomological degrees, there is a map $\Phi^0(\cE) \to \Phi(\cE)$ in $\bD(\cat{A})$ which induces a map $\cE^\prime \to \Phi(\cE)$.  Let $\Psi$ be the inverse equivalence to $\Phi$.  Then with $\cB^\prime = \Psi(\cE^\prime)$ we obtain a map $\cB^\prime \to \cE$ that factors, for large $N$, through the maps $\cE \tensor \cL^{\tensor -N} \to \cE$ induced by general sections.  Since $\cB^\prime$ is concentrated in non-positive homological degrees, the map $\cB^\prime  \to \cE$ factors through $\sfH^0(\cB^\prime)$.  Since $\cL$ is ample, it follows from the Krull intersection theorem the intersection of the subsheaves $\cE \tensor \cL^{\tensor -N} \to \cE$ is contained in the maximal finite length subsheaf $\cG \subset \cE$.  So $\cB^\prime \to \cE$ factors through $\cB^\prime \to \cG$.  Hence $\cE^\prime \to \Phi^0(\cE)$ factors through a map $\cE^\prime \to \Phi^0(\cG)$. Since $\Phi^0(\cG) \in \bF$ this map is zero and therefore $\cE^\prime = 0$.  Thus $\Phi^0(\cE) \in \bF$.
\end{proof}

The following Lemma is useful for analyzing equivalences $\Phi:\bD_Z(X) \to \bD(\cat{A})$ where $\cat{A}$ admits a natural weak central charge. 
\begin{lem}\label{lem:mainHN}
Let $X$ be a quasi-projective variety, $W$ a closed subset, and $\Phi:\bD_W(X) \to \bD(\cat{A})$ be a left exact equivalence that satisfies (GV) and (SV).  Suppose that $\sfZ$ is a weak central charge on $\cat{A}$ such that for every point $p \in W$, 
\begin{enumerate}
\item $\Phi^0(\cO_p) \in \bF_\psi$, 
\item $\theta(\Phi^0(\cO_p))=0$, but $\theta(x) > 0$ for every proper nonzero quotient $\Phi^0(\cO_p) \onto x$ where $x \in \bF_\psi$, and
\item if $\sfZ(x) = 0$, $\Ext^1(x,\Phi^0(\cO_p))=0$.
\end{enumerate}
Then for a sheaf $\cE$ supported on $W$ we have $\Phi^0(\cE) \in \bF_\sfZ$ and $\Phi^m(\cE) \in \bT_\sfZ$ where $m = \dim(\cE)$.
\end{lem}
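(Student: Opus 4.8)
The plan is to reduce the statement to Lemma \ref{lem:main} applied to the torsion pair $(\bT_\sfZ,\bF_\sfZ)$, so that all the work becomes the verification of the two hypotheses of that lemma. Since the two conclusions coincide (take $\bT=\bT_\sfZ$ and $\bF=\bF_\sfZ$), it suffices to check that (i) $\Phi^0(\cO_p)\in\bF_\sfZ$ for every $p\in W$, and (ii) every nonzero quotient $\Phi^0(\cF)\onto x$ with $\cF$ zero-dimensional on $W$ and $x\in\bF_\sfZ$ admits a nonzero map $x\onto\Phi^0(\cO_q)$ for some $q$. Throughout I use the weak central charge inequalities $\psi\geq 0$ and $\theta\geq 0$ on $\bT_\psi=\{\psi=0\}$, together with the observation that $\bT_\psi\subseteq\bT_\sfZ$, hence $\bF_\sfZ\subseteq\bF_\psi$; in particular $\bF_\psi$ is closed under subobjects and every object of $\bF_\sfZ$ is $\psi$-torsion-free.

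For hypothesis (i), suppose $a\subseteq\Phi^0(\cO_p)$ were a nonzero subobject lying in $\bT_\sfZ$. Since $\Phi^0(\cO_p)\in\bF_\psi$ (condition (1)) and $\bF_\psi$ is closed under subobjects, $a\in\bF_\psi$, so applying the definition of $\bT_\sfZ$ to the identity quotient of $a$ gives $\theta(a)>0$. Writing $q=\Phi^0(\cO_p)/a$, additivity gives $\theta(q)=-\theta(a)<0$. On the other hand $\theta(q_{\bT_\psi})\geq 0$ because $q_{\bT_\psi}\in\bT_\psi$, while $q_{\bF_\psi}$ is a proper quotient of $\Phi^0(\cO_p)$ lying in $\bF_\psi$, so condition (2) forces $\theta(q_{\bF_\psi})\geq 0$; hence $\theta(q)\geq 0$, a contradiction. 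Therefore $\Phi^0(\cO_p)\in\bF_\sfZ$.

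For hypothesis (ii), I first note that a zero-dimensional $\cF$ has a composition series with quotients $\cO_{p_i}$, and since $\Phi$ is left exact and satisfies (GV) every term stays in $\cat{A}$; thus $\Phi^0(\cF)$ is an iterated extension of the objects $\Phi^0(\cO_{p_i})$. Each $\Phi^0(\cO_{p_i})$ has $\theta=0$ and, by condition (2) together with the computation above, is stable of slope $0$ for $\sfZ$; as $\bF_\psi$ is closed under extensions, $\Phi^0(\cF)$ is slope-$0$ semistable. Given $\Phi^0(\cF)\onto x$ with $x\in\bF_\sfZ$ nonzero, the quotient satisfies $\mu_{\min}(x)\geq 0$ while $x\in\bF_\sfZ$ gives $\mu_{\max}(x)\leq 0$, so $x$, and hence the kernel, is slope-$0$ semistable. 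Consequently $x$ is a quotient of $\Phi^0(\cF)$ inside the abelian category of slope-$0$ semistable objects, its Jordan--H\"older factors form a sub-multiset of $\{\Phi^0(\cO_{p_i})\}$, and in particular its head is some $\Phi^0(\cO_q)$, giving the desired nonzero surjection.

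The delicate point is controlling the degenerate objects with $\sfZ=0$, which is exactly where condition (3) enters: a priori the simple slope-$0$ quotients of $x$ could include objects of $\cat{A}_0=\{a:\sfZ(a)=0\}$, which are not of the form $\Phi^0(\cO_q)$. To exclude this I would show $\Phi^0(\cF)$ has no nonzero quotient in $\cat{A}_0$: such a quotient would give a short exact sequence $0\to K\to\Phi^0(\cF)\to S\to 0$ with $S\in\cat{A}_0$ simple and $K$ an iterated extension of the $\Phi^0(\cO_{p_i})$; condition (3) yields $\Ext^1(S,\Phi^0(\cO_{p_i}))=0$, so by d\'evissage $\Ext^1(S,K)=0$, the sequence splits, and $S$ becomes a nonzero subobject of $\Phi^0(\cF)\in\bF_\psi$, contradicting $S\in\bT_\psi$. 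I expect this interplay between the $\psi$-torsion pair and condition (3) — rather than the formal reduction — to be the main obstacle, since in the weak central charge setting one cannot simply invoke the existence of Harder--Narasimhan filtrations and the abelianness of the semistable subcategory, and these must be arranged by hand for the specific objects at issue.
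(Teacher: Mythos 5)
Your reduction to Lemma \ref{lem:main} with $(\bT,\bF)=(\bT_\sfZ,\bF_\sfZ)$ is exactly the paper's route, and your verification of hypothesis (i) is correct (indeed more explicit than the paper, which leaves it implicit). But two steps in your verification of hypothesis (ii) are genuinely unproved, and one of them would fail as written. First, you assert without argument that $x\in\bF_\sfZ$ forces $\theta\leq 0$ on $x$ and its subobjects. This is not definitional: $\bF_\sfZ$ is defined by $\Hom(\bT_\sfZ,x)=0$, while $\bT_\sfZ$ is defined through the $\bF_\psi$-parts of quotients, so there is no direct implication. The paper devotes a dedicated argument to exactly this claim: assuming $\theta(x)>0$, choose a subobject $x'\subseteq x$ minimizing $\psi$ among subobjects with $\theta>0$ (using $\psi>0$ on nonzero subobjects of $\bF_\sfZ$-objects), show that every nonzero $\bF_\psi$-quotient of $x'$ has $\theta>0$, conclude $x'\in\bT_\sfZ$, and contradict $\Hom(\bT_\sfZ,\bF_\sfZ)=0$. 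Without this, the semistability of $x$ on which your whole Jordan--H\"older discussion rests is unsupported.

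Second, and more seriously, your treatment of the degenerate objects breaks down at the d\'evissage. In your sequence $0\to K\to \Phi^0(\cF)\to S\to 0$ with $S\in\cat{A}_0$, the kernel $K$ is merely a subobject of an iterated extension of the $\Phi^0(\cO_{p_i})$, and such a subobject is in general built from \emph{proper subobjects} of the $\Phi^0(\cO_{p_i})$: if $S$ is a degenerate quotient of the top factor $\Phi^0(\cO_{p_2})$ in a two-step filtration, then $K$ is an extension of $\ker(\Phi^0(\cO_{p_2})\onto S)$ by $\Phi^0(\cO_{p_1})$. So condition (3) does not yield $\Ext^1(S,K)=0$, and in fact your intermediate goal --- that $\Phi^0(\cF)$ has no nonzero quotient in $\cat{A}_0$ at all --- is neither provable from hypotheses (1)--(3) (which only control extensions by whole objects $\Phi^0(\cO_q)$, not by their subobjects) nor needed: one only has to control quotients $x$ that lie in $\bF_\sfZ$, because it is the membership $x\in\bF_\psi$ that produces the final contradiction. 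The paper's fix is an induction on the length of $\cF$ that arranges the d\'evissage to be legitimate: with $\cF'=\ker(\cF\onto\cO_p)$ and $x'$ the image of $\Phi^0(\cF')$ in $x$, the inductive hypothesis makes $x'$ (not your $K$) an honest iterated extension of objects $\Phi^0(\cO_q)$; $\theta$-bookkeeping gives $\theta(x')=0$ and $\theta(x/x')=0$, whence either $x/x'\cong\Phi^0(\cO_p)$ or $\sfZ(x/x')=0$, and in the latter case condition (3) applied along the filtration of $x'$ splits $0\to x'\to x\to x/x'\to 0$, exhibiting a nonzero $\bT_\psi$-subobject of $x\in\bF_\psi$, which is absurd. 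This inductive structuring is precisely the device that replaces the Harder--Narasimhan/Jordan--H\"older formalism you correctly observe is unavailable in the weak central charge setting; your proposal names the right mechanism but does not actually arrange it.
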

\begin{proof}
We will deduce this from Lemma \ref{lem:main}.  More precisely, we must check that if $\cF \in \coh_W(X)$ is a sheaf of finite length and $\Phi^0(\cF) \onto x$ is a surjection with $x \in \bF_\sfZ$ then there is a point $p \in X$ and a surjection $x \onto \Phi^0(\cO_p)$.  In fact, we will show that in this situation, $x$ is an iterated extension of objects of the form $\Phi^0(\cO_p)$, $p \in X$. 

Suppose that $x \in \bF_\sfZ$.  We claim that $\theta(x) \leq 0$.  Assume not for a contradiction.  Since $\bT_\psi \subset \bT_\sfZ$ we see that $\psi$ is positive on all subobjects of $\bF_\sfZ$.  Let $x' \subset x$ be a subobject that minimizes $\psi$ among all subobjects with $\theta(x') > 0$.  Suppose that $x' \onto y$ is a quotient with $y \in \bF_\psi$.  Then $\psi(y) > 0$ so if $x^\pp \subset x'$ is the kernel of $x'\onto y$ then $\psi(x^\pp) < \psi(x')$.  Therefore, $\theta(x^\pp) \leq 0$ by minimality of $x'$.  So $\theta(y) \geq \theta(x') > 0$.  Hence $x' \in \bT_\sfZ$, which is absurd since $\Hom(\bT_\sfZ,\bF_\sfZ) = 0$.

We now proceed by induction on the length of $\cF$.  Choose a surjection $\cF \onto \cO_p$ and let $\cF'$ be its kernel.  Let $x'$ be the image in $x$ of $\Phi^0(\cF')$ under $\Phi^0(\cF) \to x$.  Then $x/x'$ is a quotient of $\Phi^0(\cO_p)$ and therefore $\theta(x/x') \geq 0$.  By induction, $x'$ is an iterated extension of objects of the form $\Phi^0(\cO_q)$.  So we see that $\theta(x') = 0$ and therefore $\theta(x/x') = 0$.  However, this implies that either $x/x' \cong \Phi^0(\cO_p)$ or $\psi(x/x') = 0$.  The possibility that $x/x'$ is a nonzero object and $\psi(x/x')=0$ is ruled out since (3) implies that the exact sequence
\[ 0 \to x' \to x \to x/x' \to 0\] 
would have to split, yet $x \in \bF_\psi$.
\end{proof}

We can use Lemma \ref{lem:onetilt} to detect whether a given t-structure on $\bD_W(X)$ can be reconstructed from a torsion pair on $\coh_W(X)$.  However, we would like to characterize when the t-structure is a HN tilt of the standard t-structure.  Let $\eta_1,\dotsc,\eta_p$ be the generic points of the components of $W$.  Then for any sheaf $\cF$ supported on $W$, we define 
\[ \rk_W(\cF) = \sum_{i=1}^p{ \mathrm{length}_{\cO_{X,\eta_i}}( \cF_{\eta_i} ) }.\] 
We call a sheaf $\cF$ with $\rk_W(\cF)=0$ \emph{torsion} and a sheaf is \emph{torsion-free} if it has no torsion subsheaves.
\begin{lem}\label{lem:reverseHN}
Let $\alpha$ be a t-structure on $\bD_W(X)$.  Suppose that for all $x \in \cA_\alpha$, $\sfH^i(x) = 0$ for $i \neq -1,0$.  The t-structure $\alpha$ is a HN tilt of the standard t-structure on $\bD_W(X)$ for the weak central charge $\sfZ = \theta + i \rk_Z$ if and only if 
\begin{enumerate}
\item every torsion sheaf belongs to $\cA_\alpha$, 
\item $\theta$ is non-negative on $\cA_\alpha$, and
\item if $\cF \in \coh_W(X) \cap \cA_\alpha$ is a sheaf such that $\theta(\cF) = 0$ then $\cF$ is torsion.
\end{enumerate}
\end{lem}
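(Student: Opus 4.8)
The plan is to reduce the statement to an equality of torsion pairs in $\coh_W(X)$ and then verify the two inclusions separately. Since $\sfH^i(x)=0$ for $i\neq -1,0$ whenever $x\in\bD^\hrt_\loz$, Lemma \ref{lem:onetilt} shows that, up to a shift, $\bD^\bt_\loz$ is the t-structure attached by Theorem \ref{thm:HRS} to a torsion pair $\pi_\loz=(\bT_\loz,\bF_\loz)$ in $\coh_W(X)$. Normalizing the shift so that the heart has cohomology in degrees $-1$ and $0$, one reads off $\bT_\loz=\coh_W(X)\cap\bD^\hrt_\loz$ and $\bF_\loz=\{\cG:\cG[1]\in\bD^\hrt_\loz\}$; in particular a sheaf in $\bF_\loz$ occupies cohomological degree $-1$ of the heart. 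For $\sfZ=\theta+i\rk_W$ the imaginary part $\psi=\rk_W$ has $\bT_\psi$ the torsion sheaves and $\bF_\psi$ the torsion-free sheaves, and $\pi_\sfZ=(\bT_\sfZ,\bF_\sfZ)$ is the associated HN torsion pair. Since a torsion pair is determined by its torsion class, I would reduce the lemma to the assertion that $\bT_\loz=\bT_\sfZ$ if and only if (1)--(3) hold.

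For the forward direction I would assume $\pi_\loz=\pi_\sfZ$. Because $\bD^\bt_\loz$ is by hypothesis a HN tilt, $\sfZ$ is a weak central charge, so $\theta\geq 0$ on torsion sheaves; combined with the slope description of $\pi_\sfZ$ (exactly as in the example $\sfZ_s$ discussed before the lemma) this gives $\theta\geq 0$ on $\bT_\sfZ$ and $\theta\leq 0$ on $\bF_\sfZ$. Condition (1) is then immediate, since every torsion sheaf lies in $\bT_\psi\subseteq\bT_\sfZ=\coh_W(X)\cap\bD^\hrt_\loz$. For (2), any $x\in\bD^\hrt_\loz$ has $\sfH^0(x)\in\bT_\sfZ$ and $\sfH^{-1}(x)\in\bF_\sfZ$, so $\theta(x)=\theta(\sfH^0(x))-\theta(\sfH^{-1}(x))\geq 0$. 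For (3), a sheaf $\cF$ in the heart lies in $\bT_\sfZ$; if it were not torsion the quotient $\cF\onto\cF_{\bF_\psi}$ onto its torsion-free part would be nonzero, forcing $\theta(\cF_{\bF_\psi})>0$ and hence $\theta(\cF)>0$.

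For the converse I would first observe that (1) and (2) force $\theta\geq 0$ on torsion sheaves (they lie in the heart), so $\sfZ$ is genuinely a weak central charge and $\pi_\sfZ$ is defined. The inclusion $\bT_\loz\subseteq\bT_\sfZ$ is routine: given $\cF\in\bT_\loz$ and a quotient $\cF\onto\cF''$, the torsion class $\bT_\loz$ is closed under quotients, so $\cF''$ and its torsion-free part $\cF''_{\bF_\psi}$ both lie in $\bT_\loz=\coh_W(X)\cap\bD^\hrt_\loz$; if $\cF''_{\bF_\psi}\neq 0$ it is a nonzero torsion-free object of the heart, so (3) gives $\theta(\cF''_{\bF_\psi})\neq 0$ and (2) gives $\theta(\cF''_{\bF_\psi})\geq 0$, whence $\theta(\cF''_{\bF_\psi})>0$ and $\cF\in\bT_\sfZ$. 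The key step is the reverse inclusion $\bT_\sfZ\subseteq\bT_\loz$. Here I would take $\cF\in\bT_\sfZ$, decompose it by $\pi_\loz$ as $0\to a\to\cF\to b\to 0$ with $a\in\bT_\loz$ and $b\in\bF_\loz$, and aim to show $b=0$. The object $b$ is a quotient of $\cF$, hence lies in $\bT_\sfZ$; and since (1) places all torsion sheaves in $\bT_\loz$, the object $b\in\bF_\loz$ is torsion-free, so applying the defining property of $\bT_\sfZ$ to the identity quotient gives $\theta(b)>0$ unless $b=0$. On the other hand $b\in\bF_\loz$ means $b[1]\in\bD^\hrt_\loz$, so $\theta(b)=-\theta(b[1])\leq 0$ by (2). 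These are incompatible unless $b=0$, i.e. $\cF=a\in\bT_\loz$. Thus $\bT_\loz=\bT_\sfZ$, hence $\pi_\loz=\pi_\sfZ$, which is exactly the claim that $\bD^\bt_\loz$ is the HN tilt for $\sfZ$ up to a shift.

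The main obstacle I anticipate is bookkeeping rather than depth: one must pin down the shift convention so that $\bF_\loz$ really occupies cohomological degree $-1$ of the heart, because the entire converse turns on the sign flip $\theta(b)=-\theta(b[1])$ that makes condition (2) collide with the strict positivity built into $\bT_\sfZ$. A secondary point to handle with care is the forward-direction inequality $\theta\leq 0$ on $\bF_\sfZ$, which is not formal; I would either invoke the Harder--Narasimhan/slope description of $\pi_\sfZ$ directly or reprove it by a minimal-destabilizing-subobject argument of the type used in Lemma \ref{lem:mainHN}.
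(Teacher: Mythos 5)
Your proof is correct and takes essentially the same route as the paper: reduce via Lemma \ref{lem:onetilt} to the identity $\bT_{\sfZ}=\coh_W(X)\cap\bD^{\hrt}_{\loz}$, prove the inclusion $\subseteq$ of the heart's torsion class by applying (2) and (3) to torsion-free quotients, and prove the reverse inclusion by decomposing $\cF\in\bT_{\sfZ}$ against the torsion pair of $\bD^{\bt}_{\loz}$ and playing the strict positivity $\theta(b)>0$ off against $\theta(b)\leq 0$, which follows from $b[1]\in\bD^{\hrt}_{\loz}$ and (2). The only difference is that you also write out the ``only if'' direction (using $\theta\geq 0$ on $\bT_{\sfZ}$ and $\theta\leq 0$ on $\bF_{\sfZ}$, the latter by the minimal-$\psi$ subobject argument already appearing in the proof of Lemma \ref{lem:mainHN}), a direction the paper's proof leaves implicit.
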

\begin{proof}
We begin with the observation that $\coh_W(X) \cap \cA_\alpha$ is a torsion class and hence closed under quotients.  Now, $\theta$ is non-negative on $\coh_W(X) \cap \cA_\alpha$.  So if $\cF \in \coh_W(X) \cap \cA_\alpha$ and $\cF^\pp$ is its free part, $\theta(\cF^\pp) > 0$ by condition (3).  So $\theta$ is positive on every free quotient.  Therefore $\coh_W(X) \cap \cA_\alpha \subset \bT_\sfZ$.  On the other hand, suppose that $\cF \in \bT_\sfZ$ and let $\cF' \subset \cF$ be the maximal subobject of $\cF$ belonging to $\coh_W(X) \cap \cA_\alpha$.  Since all torsion sheaves belong to $\cA_\alpha$, we see that $\cF^\pp = \cF/\cF'$ is a torsion-free sheaf.  Therefore if $\cF^\pp \neq 0$ then $\theta(\cF^\pp) > 0$.  However, $\cF^\pp[1] \in \cA_\alpha$ and since $\theta$ is non-negative on $\cA_\alpha$, $\cF^\pp = 0$.  We conclude that $\cF \in \coh_W(X) \cap \cA_\alpha$ and thus $\bT_\sfZ = \coh_W(X) \cap \cA_\alpha$.
\end{proof}

Suppose that $X$ is quasiprojective variety and $W \subset X$ is a surface.  Say that $\Phi:\bD_W(X) \to \bD(\cat{A})$ is a left exact equivalence where $\Phi^i(\cE) = 0$ for $\cE \in \coh_W(X)$ and $i > 2$.  Let $\pi = (\bT,\bF)$ be a torsion pair as in Lemma \ref{lem:main}.  Then $\alpha_\Phi$ is a tilt of $\beta_\pi$, where $\beta$ is the standard t-structure on $\bD(\cat{A})$, as in the discussion above.  It also follows that if $\cF$ is a torsion sheaf then $\Phi(\cF) \in \cA_{\beta_\pi}$.  So if there is an additive function $\theta$ on $\cat{A}$ that is non-negative on $\bT$ and non-positive on $\bF$, then $\theta$ induces a non-negative additive function on $\cA_{\beta_\pi}$.  Via $\Phi$ we can view $\theta$ as an additive function on $\coh(X)$.  Now $\Phi(\cF) \in \cA_{\beta_\pi}$ if and only if $\Phi^2(\cF) = 0$ and $\Phi^1(\cF) \in \bT$.  So by the previous Lemma, $\beta_\pi$ is an ordinary HN tilt of $\alpha_\Phi$ if and only if any sheaf such that $\theta(\cF) =0$, $\Phi^2(\cF) = 0$, and $\Phi^1(\cF) \in \bT$ is supported on a curve.

\section{The derived McKay correspondence}
Let $G \subset \SL_n(\C)$ be a nontrivial finite subgroup.  Then $G$ naturally acts on $\C^n$.  The categorical quotient $\C^n/G$ is singular and there is a tight connection between the representation theory of $G$ and the crepant resolutions of $\C^n/G$.  One approach to resolving the singularities on $\C^n/G$ is to consider spaces of stable $G$-equivariant sheaves on $\C^n$ of finite length.

\begin{definition}\mbox{}
\begin{enumerate}
\item A \emph{$G$-constellation} on $\C^n$ is a $G$-equivariant, finite length sheaf $\cF$ such that $\rmH^0(\cF) \cong \C G$ as representations of $G$.
\item A \emph{$G$-cluster} is a finite $G$-equivariant subscheme $Z \subset \C^n$ such that $\rmH^0(\cO_Z) \cong \C G$ as representations of $G$.
\end{enumerate}
\end{definition}

Note that a $G$-cluster is a $G$-constellation expressed as a $G$-equivariant quotient of $\C^n$.  There is a natural notion of stability for $G$-constellations, generalizing King stability.  Let $\theta:K_0(\C G) \to \Z$ be a homomorphism such that $\theta(\C G) = 0$.  Then we say that a $G$-constellation $\cF$ is \emph{$\theta$-semi-stable} if for every proper, nonzero quotient $\cF \onto \cF''$, $\theta(\cF'') \geq 0$.  For each $\theta$, there is a fine moduli space $\cM_\theta$ of $\theta$-stable $G$-constellations \cite{CI}.  There is a special additive function of $G$ representations, $\theta_0:K_0(\C G) \to \Z$, by 
\[
\theta_0(V) = \begin{cases} -\dim(V)^2 & \text{if $V$ is nontrivial,} \\ \# G - 1 & \text{if $V$ is trivial.} \end{cases}
\]
Then $\cM_{\theta_0} \cong G\Hilb(\C^n)$, the fine moduli space of $G$-clusters (see \cite{INm,INj,CI}).

We denote the category of $G$-equivariant coherent sheaves on $\C^n$ by $\coh(\C^n)^G$ and its derived category by $\bD(\C^n)^G$.  Of course, $\coh(\C^n)^G$ may be interpreted as the category of coherent sheaves on the stack quotient $[\C^n/G]$.  We can view the stack quotient $[\C^n/G]$ as a tautological crepant resolution of the categorical quotient $\C^n/G$.  One aspect of the (mostly conjectural) derived McKay correspondence is that $\bD(\C^n)^G$ should be equivalent to the derived category of any geometric crepant resolution of $\C^n/G$.  Let $\cE_\theta \in \coh(\C^n \times \cM_\theta)^G$ denote the universal $G$-constellation, where $G$ acts on $\C^n \times \cM_\theta$ via the first factor.  Then we consider the functor
\[ \Phi = \bR p_{\C^n *}( \cE_\theta \tensor p_{\cM_\theta}^*(?) ):\bD(\cM_\theta) \to \bD(\C^n)^G\] 
where $\bD(\C^n)^G$ denotes the derived category of $G$-equivariant coherent sheaves on $\C^n$.  Kapranov and Vasserot \cite{KV} proved that $\Phi$ is an equivalence in dimension $n=2$, and Bridgeland, King, and Reid \cite{BKR} proved that it is an equivalence in dimension $n=3$.

Suppose that $\theta:K_0(\C G) \to \Z$ is a homomorphism such that $\theta(\C G) = 0$.  Let $(\bT_\theta,\bF_\theta)$ be the  torsion pair with torsion class
\[ \bT_\theta = \{ \cF : \text{ for all nonzero finite length, $G$-equivariant quotients $\cF \onto \cG$, $\theta(\rmH^0(\cG)) > 0$ } \}. \] 
Consider a sheaf $\cF$ that is supported at a point $p$ such that $G_p$ is trivial.  Let $G \cdot p$ be the orbit of $p$ and $\cO_{G\cdot p}$ its structure sheaf. The category of $G$-equivariant coherent sheaves supported on $G \cdot p$ has a unique simple object, $\cO_{G \cdot p}$.  Therefore $\rmH^0( \cF \tensor \cO_{G \cdot p} ) = \C G ^{\oplus N}$ for some $N$.  It follows that $\wtt(\cF \tensor \cO_{G \cdot p}) = 0$.  Hence if $\cF \in \bT_\theta$ then $\cF$ is supported on the locus in $\C^n$ where the action of $G$ is not free.  

Suppose that $n=2$.  Then $\C^2/G$ is a Kleinian singularity and $\cM_\theta$ is either empty or equal to $G\Hilb(\C^2)$.  We note that in this situation $\bT_\theta$ consists entirely of $G$-equivariant sheaves supported at the origin.  Using the length function $\ell$ on the category of $G$-equivariant sheaves supported at the origin, one can construct HN filtrations.  Then $\bT_\theta$ consists of those $G$-equivariant sheaves supported at the origin whose HN factors all have positive slope.

\begin{thm}\label{thm:2dMcKay}
Under the equivalence $\Phi:\bD(G\Hilb(\C^2))\to \bD(\C^2)^G$ the standard t-structure on $\bD(G\Hilb(\C^2))$ is identified with the tilt of $\coh(\C^2)^G$ with respect to $(\bT_{\theta_0},\bF_{\theta_0})$.
\end{thm}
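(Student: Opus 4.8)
The plan is to apply Lemma~\ref{lem:mainHN} to the equivalence $\Phi:\bD(G\Hilb(\C^2)) \to \bD(\C^2)^G$, together with the reconstruction criterion of Lemma~\ref{lem:reverseHN}, so that the standard t-structure on $\bD(G\Hilb(\C^2))$ is exhibited as the HN tilt coming from a suitable weak central charge on $\coh(\C^2)^G$. Concretely, I would take the weak central charge to be $\sfZ = -\theta_0 \circ \rmH^0 + i\,\ell$ on the category of finite-length $G$-equivariant sheaves, where $\ell$ is the length function, extended in the maximal way to all of $\coh(\C^2)^G$; the associated torsion pair is exactly $(\bT_{\theta_0},\bF_{\theta_0})$ as described above. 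Since $n=2$, every object of $\bT_{\theta_0}$ is supported at the origin, which keeps the finite-length analysis manageable.

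First I would identify, via the inverse equivalence $\Psi = \Phi^{-1}$, the objects $\Phi^0(\cO_p)$ for $p \in G\Hilb(\C^2)$. A point of $G\Hilb(\C^2)$ corresponds to a $G$-cluster $Z_p \subset \C^2$, and under the Fourier--Mukai transform the skyscraper $\cO_p$ goes to the corresponding $G$-equivariant structure sheaf $\cO_{Z_p}$, a $G$-constellation with $\rmH^0(\cO_{Z_p}) \cong \C G$. Thus $\Phi(\cO_p) = \Phi^0(\cO_p) = \cO_{Z_p} \in \coh(\C^2)^G$, so $\Phi$ is left exact on skyscrapers and satisfies (GV) on points. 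The next step is to verify the three hypotheses of Lemma~\ref{lem:mainHN}: (1) that $\cO_{Z_p} \in \bF_\psi$, i.e.\ it has positive length; (2) that $\theta_0(\rmH^0(\cO_{Z_p})) = \theta_0(\C G) = 0$ while every proper nonzero finite-length quotient $\cO_{Z_p} \onto x$ with $x \in \bF_\psi$ has $\theta_0(\rmH^0(x)) > 0$; and (3) a vanishing $\Ext^1(x, \cO_{Z_p}) = 0$ whenever $\sfZ(x)=0$. Condition (2) is precisely the statement that $G$-clusters are $\theta_0$-stable, which is the defining property of $\cM_{\theta_0} = G\Hilb(\C^2)$ and follows from the special shape of $\theta_0$ (any proper equivariant quotient of $\C G$ omits some nontrivial irreducible, making $\theta_0$ strictly positive there). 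Granting these, Lemma~\ref{lem:mainHN} yields $\Phi^0(\cE) \in \bF_{\sfZ}$ and $\Phi^m(\cE) \in \bT_{\sfZ}$ for every sheaf $\cE$ of dimension $m$.

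Once the hypotheses of Lemma~\ref{lem:mainHN} are in place, the torsion pair $(\bT,\bF) = (\bT_{\sfZ},\bF_{\sfZ}) = (\bT_{\theta_0},\bF_{\theta_0})$ satisfies the conditions of Lemma~\ref{lem:main}, so $\bD_\Phi^\bt$ is a tilt of $\bD_{\pi}^\bt$ for $\pi = (\bT_{\theta_0},\bF_{\theta_0})$. Since $\Phi$ is an equivalence of two-dimensional flavor with $\Phi^i = 0$ for $i > 2$, I would then invoke the discussion following Lemma~\ref{lem:reverseHN}: $\bD_\pi^\bt$ is an \emph{ordinary} HN tilt of $\bD_\Phi^\bt$ precisely when any sheaf $\cF$ with $\theta_0(\cF)=0$, $\Phi^2(\cF)=0$, and $\Phi^1(\cF) \in \bT_{\theta_0}$ is supported on a curve. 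Because $\C^2/G$ is a Kleinian singularity and $\bT_{\theta_0}$ consists of sheaves supported only at the origin, the verification collapses to a finite-length computation at the origin, which is exactly where the stability of $\theta_0$-constellations was arranged. The final step is to check that the heart $\bD_\pi^\hrt$, which is the tilt of $\coh(\C^2)^G$ with respect to $(\bT_{\theta_0},\bF_{\theta_0})$, coincides under $\Phi$ with the standard heart $\coh(G\Hilb(\C^2))$; this is the assertion that $\Phi$ identifies the two t-structures exactly, with no residual shift.

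I expect the main obstacle to be hypothesis (3) of Lemma~\ref{lem:mainHN}, the $\Ext^1$-vanishing $\Ext^1(x,\cO_{Z_p}) = 0$ for $x$ with $\sfZ(x)=0$. Objects $x$ with $\sfZ(x)=0$ are the $G$-equivariant sheaves at the origin whose global sections are a multiple of $\C G$ with $\theta_0 = 0$, i.e.\ the free summands, and one must show there is no nonsplit extension of $\cO_{Z_p}$ by such an object that could disrupt the iterated-extension structure promised in the proof of Lemma~\ref{lem:mainHN}. This is really a statement about the rigidity of $G$-clusters among $G$-constellations and should follow from the smoothness of $G\Hilb(\C^2)$ and the stability of $\cO_{Z_p}$, but pinning it down carefully is the delicate point; everything else reduces to the representation theory of the special charge $\theta_0$ and the general machinery already established.
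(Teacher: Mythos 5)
Your overall skeleton (Lemma~\ref{lem:mainHN} with the length/$\theta_0$ charge, keyed to the $\theta_0$-stability of $G$-clusters) matches the paper, but two things go wrong. First, the step you flag as the ``main obstacle'' is a misreading of Lemma~\ref{lem:mainHN}: hypothesis (3) is conditioned on $\sfZ(x)=0$, i.e.\ on \emph{both} $\theta(x)=0$ and $\ell(x)=0$, and since $\ell$ vanishes only on the zero object, hypotheses (1) and (3) are vacuous for $\sfZ=\theta_0\circ\rmH^0+i\ell$ --- exactly as the paper observes in the proof of Theorem~\ref{thm:main.McKay}. There is no rigidity statement about extensions of $G$-clusters by ``free summands'' to prove; the only substantive hypothesis is (2), the stability of $G$-clusters, which you identify correctly. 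Two smaller issues in the same passage: your sign $\sfZ=-\theta_0\circ\rmH^0+i\ell$ yields, under the paper's definition of $\bT_\sfZ$, the class of sheaves whose finite-length quotients have $\theta_0\circ\rmH^0<0$, which is the opposite of $\bT_{\theta_0}$ (the introduction contains the same sign slip, but the proofs use $\theta_0\circ\rmH^0+i\ell$); and $\sfZ$ does not extend ``in the maximal way'' to a weak central charge on all of $\coh(\C^2)^G$, since $\ell$ is not finite off the finite-length subcategory. The paper sidesteps this by applying Lemma~\ref{lem:mainHN} only to the restricted equivalence $\Phi_0:\bD_E(G\Hilb(\C^2))\to\bD_0(\C^2)^G$, where the finite-length category genuinely carries $\sfZ$.

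Second, and more seriously, your appeal to Lemma~\ref{lem:reverseHN} and the two-tilt discussion is the machinery for the three-dimensional Theorem~\ref{thm:main.McKay}, whereas the present theorem asserts something stronger: $\bD^\bt_\Phi$ \emph{equals} the tilt at $(\bT_{\theta_0},\bF_{\theta_0})$, with no residual tilt at all. Lemma~\ref{lem:reverseHN} could at best show that the residual tilt is an HN tilt; it cannot show it is trivial, and your ``final step \dots\ no residual shift'' is asserted without argument, although it is the entire content of the theorem. The missing idea is the amplitude bound: the fibers of $G\Hilb(\C^2)\to\C^2/G$ are at most one-dimensional, so $\Phi^i(\cF)=0$ for $i\neq 0,1$ for every sheaf $\cF$, whence by Lemma~\ref{lem:onetilt} the t-structure $\bD^\bt_\Phi$ is \emph{itself} obtained by tilting $\coh(\C^2)^G$ at some torsion pair $(\bT',\bF')$, and the remaining work is to identify $\bT'$. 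The paper does this by observing that any $x\in\bT'$ is supported at the origin (away from the exceptional locus $\Phi$ is exact), so $x=\Phi^1(\cG)$ for some $\cG\in\coh_E(G\Hilb(\C^2))$ via the restricted equivalence, hence $x\in\bT_{\theta_0}$ by Lemma~\ref{lem:mainHN}; since likewise $\bF'\subseteq\bF_{\theta_0}$, the two torsion pairs coincide. As written, your argument establishes only that the two t-structures are related by some tilt, not the exact identification claimed.
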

\begin{proof}
We note that $\Phi$ is left exact and satisfies SV and GV.  Let $E \subset G\Hilb(\C^2)$ be the exceptional divisor of the map $G\Hilb(\C^2)\to \C^2/G$.  Then $\Phi$ restricts to an equivalence $\Phi_0:\bD_E(G\Hilb(\C^2)) \to \bD_0(\C^2)^G = \bD( \coh_0(\C^2)^G )$.  Now, we notice that $(\bT_\theta,\bF_\theta)$ induces a torsion pair on the category of $G$-equivariant sheaves supported at the origin that agrees with $(\bT_\sfZ,\bF_\sfZ)$ where $Z = \theta_0 \circ \rmH^0 + i \ell$.  So we can apply Lemma \ref{lem:mainHN}.  Since $G$-clusters are $\theta_0$-stable, Lemma \ref{lem:mainHN} implies that if $\cF$ is a sheaf supported on $E$ then $\Phi^0(\cF) \in \bF_{\theta_0}$ and $\Phi^1(\cF) \in \bT_{\theta_0}$.  

For any $\cF \in \coh(G\Hilb(\C^2)$, $\Phi^i(\cF) = 0$ unless $i=0,1$.  Thus Lemma \ref{lem:onetilt} implies that there is a torsion pair $\pi= (\bT',\bF')$ in $\coh(\C^2)^G$ such that $\alpha_\Phi = \beta_\pi$, where $\beta$ is the standard t-structure on $\bD(\C^2)^G$.  Then $\bT' \subset \coh_0(\C^2)^G$.  We compute
\begin{align*}
\bT' & = \coh(\C^2)^G \cap \coh(G\Hilb(\C^2))[1]  \\ 
& = \coh_0(\C^2)^G \cap \coh(G\Hilb(\C^2))[1] \\
& = \coh_0(\C^2)^G \cap \coh_E(G\Hilb(\C^2))[1] \\
& = \bT_{\theta_0}, 
\end{align*}
where the third equality follows from the fact that $\Phi$ identifies $\bD_E(G\Hilb(\C^2))$ with $\bD_0(\C^2)^G$.
\end{proof}

We now turn to dimension three.  If $\C^3/G$ has an isolated singularity then $\bT_\theta$ once again consists of $G$-equivariant sheaves supported at the orgin and the torsion class $\bT_\theta$ can be described in terms of HN filtrations.  However if the singularity is not isolated, the torsion pair can be much more complicated.  Let $E \subset \cM_\theta$ be the part of the exceptional locus of $\cM_\theta \to \C^3/G$ lying over $0 \in \C^3/G$.  Let $\Phi:\bD(\cM_\theta) \to \bD(\C^3)^G$ be the equivalence defined above (whose Fourier-Mukai kernel is the universal $G$-constellation).  Let $\alpha$ and $\beta$ denote the standard t-structures on $\bD_E(\cM_\theta)$ and $\bD_0(\C^3)^G$, respectively.
\begin{thm}\label{thm:main.McKay}
The t-structures on $\bD(\C^3)^G$ induced by $\Phi$ and $\pi$ where $\pi = (\bT_\theta,\bF_\theta)$, respectively, are related by a (possibly trivial) tilt.  Moreover, $\Phi$ restricts to an equivalence between $\bD_E(\cM_\theta)$ and $\bD_0(\C^3)^G$.  For $\sfZ_\cM = -\theta + i \rk_E$ and $\sfZ_G = \theta + i \ell$ on $\coh_E(\C^3)$ and $\coh_0(\C^3)^G$, $\Phi_0$ identifies $\alpha_{\sfZ_\cM}[1]$ with $\beta_{\sfZ_G}$.
\end{thm}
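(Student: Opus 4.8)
The plan is to reduce the three assertions to Lemmas \ref{lem:main}, \ref{lem:mainHN}, and \ref{lem:reverseHN}, with $\theta$-stability of the universal constellations supplying every hypothesis. First I would record the properties of $\Phi$: it is left exact and satisfies (GV) and (SV), as for any Fourier--Mukai transform along a universal constellation. The decisive geometric input is an amplitude bound. Since $\cE_\theta$ is finite over $\cM_\theta$, the fibres of $\mathrm{Supp}(\cE_\theta)\to\C^3$ have dimension at most that of a fibre of $\cM_\theta\to\C^3/G$, namely at most $\dim E\leq 2$; hence $\Phi^i(\cE)=0$ for $i>2$, and $\Phi^2(\cE)$ is supported on the deepest, zero-dimensional, stratum. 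I would also note that $\Phi(\cO_p)=\cF_p$ is a $\theta$-stable constellation with $\theta(\cF_p)=0$, so every proper nonzero subobject has strictly negative $\theta$; thus $\cF_p$ has no subobject in $\bT_\theta$ and lies in $\bF_\theta$.

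For the tilt assertion I would verify the hypotheses of Lemma \ref{lem:main} for $(\bT_\theta,\bF_\theta)$. Condition (1) is the remark just made, and condition (2)---that an $\bF_\theta$-quotient of $\Phi^0(\cF)$ maps onto some $\Phi^0(\cO_p)$---is exactly the constellation-decomposition carried out inside the proof of Lemma \ref{lem:mainHN}, whose three inputs for $\sfZ_G=\theta+i\ell$ reduce to $\theta$-stability together with the triviality $\sfZ_G(x)=0\Rightarrow x=0$. Lemma \ref{lem:main} then yields $\Phi^0(\cE)\in\bF_\theta$ and $\Phi^{\dim\cE}(\cE)\in\bT_\theta$; combined with the amplitude bound and with $\Phi^2(\cE)\in\bT_\theta$ this gives $\sfH^i_\pi(\Phi\cE)=0$ for $i\neq 0,1$, so by the discussion preceding Lemma \ref{lem:main} and by Lemma \ref{lem:onetilt}, $\bD^\bt_\Phi$ is a single tilt of $\bD^\bt_\pi$. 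The restriction statement is Fourier--Mukai formalism: $\Phi$ preserves supports and $\cF_p$ is supported at the origin precisely when $p\in E$, so the images of the generators $\{\cO_p\}_{p\in E}$ of $\bD_E(\cM_\theta)$ generate $\bD_0(\C^3)^G$, which gives the equivalence $\Phi_0$.

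For the quantitative statement I would work with $\Phi_0$ and Lemma \ref{lem:reverseHN}. On $\bD_E(\cM_\theta)$ every sheaf has dimension $\leq\dim E\leq 2$, so its top cohomology is always the controlled term $\Phi^{\dim\cE}(\cE)\in\bT_{\sfZ_G}$; hence $\Phi_0$ is a clean single tilt and the part-one difficulty does not arise. I would then take King (semi)stability for $\sfZ_G$ on $\bD_0(\C^3)^G$, transport its HN-tilt through $\Phi_0^{-1}$ and shift, obtaining a t-structure on $\bD_E(\cM_\theta)$ whose heart has cohomology in two adjacent degrees, and identify it via Lemma \ref{lem:reverseHN} with the $\sfZ_\cM$-HN-tilt for $\sfZ_\cM=-\theta+i\rk_E$. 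Here $\ell$ and $\rk_E$ cut out the same Serre subcategory of lower-dimensional sheaves under $\Phi_0$ and induce matching slope orders, while the sign $-\theta$ and the shift $[1]$ record that $\Phi_0$ places the top-dimensional part of a sheaf on $E$ into $\bT_{\sfZ_G}$, reversing its role. The three conditions of Lemma \ref{lem:reverseHN}---torsion sheaves in the heart, nonnegativity of $-\theta$, and $-\theta=0\Rightarrow\rk_E$-torsion---then all follow from the containment $\Phi^0(\cE)\in\bF_{\sfZ_G}$, $\Phi^{\dim\cE}(\cE)\in\bT_{\sfZ_G}$ supplied by Lemma \ref{lem:mainHN} applied to $\Phi_0$.

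I expect the main obstacle to be the control of higher direct images in the tilt assertion, specifically showing $\Phi^2(\cE)\in\bT_\theta$ for sheaves $\cE$ of full support on $\cM_\theta$. Lemma \ref{lem:main} pins down the cohomology only in degrees $0$ and $\dim\cE$, and for a three-dimensional $\cE$ the term $\Phi^2(\cE)$ is a middle cohomology, lying strictly below $\dim\cE=3$ while the amplitude bound forces $\Phi^3=0$. One knows $\Phi^2(\cE)$ is finite length at the origin, and for any nonzero surjection $\Phi^2(\cE)\onto\cF_p=\Phi^0(\cO_p)$ the truncation map $\Phi\cE\to\Phi^2(\cE)[-2]$ produces a nonzero class in $\Ext^{-2}_{\cM_\theta}(\cE,\cO_p)=0$, a contradiction. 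The hard part will be to reduce membership in $\bT_\theta$ to the existence of such maps into constellations; this is immediate in the finite-length, isolated case but genuinely delicate when the singularity is non-isolated, where $\bT_\theta$ is not simply the sheaves supported at the origin and the constellation-decomposition of Lemma \ref{lem:mainHN} must be pushed beyond finite length.
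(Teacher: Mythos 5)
Your treatment of the second and third assertions is essentially the paper's: restrict to $\Phi_0:\bD_E(\cM_\theta)\to\bD_0(\C^3)^G$ via supports, apply Lemma \ref{lem:mainHN} (conditions (1) and (3) are vacuous because $\ell$ vanishes only on the zero object, and condition (2) comes from $\theta$-stability of the fibers of $\cE_\theta$), then apply Lemma \ref{lem:reverseHN}, whose condition (3) is checked as you gesture at: $\theta(\cF)=0$ forces $\theta(\Phi^0(\cF))=\theta(\Phi^1(\cF))=0$, hence $\Phi^1(\cF)=0$ and $\Phi^0(\cF)$ semistable, so $\Hom(\cF,\cO_p)\neq 0$ for only finitely many $p$ and $\cF$ has finite length. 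The gap is exactly where you predicted it, and your proposed repair does not close it. Your $\Ext^{-2}$ computation is sound as far as it goes: from the truncation triangle one gets $\Hom(\Phi^2(\cF),\Phi^0(\cO_p))\into\Hom(\Phi(\cF),\Phi(\cO_p)[-2])=\Ext^{-2}_{\cM_\theta}(\cF,\cO_p)=0$. But membership in $\bT_\theta$ demands $\theta(\rmH^0(\cG))>0$ for \emph{every} nonzero finite-length equivariant quotient $\cG$, and a destabilizing quotient is a priori only semistable of nonpositive slope; its stable factors need not be constellations of the form $\Phi^0(\cO_p)$ and need not admit any nonzero map to one. The constellation-decomposition inside Lemma \ref{lem:mainHN} that you invoke is an induction on the length of a sheaf $\cF$ presenting $x$ as a quotient of $\Phi^0(\cF)$, using the surjection $\cF\onto\cO_p$; the object $\Phi^2(\cF)$ for $\cF$ of full three-dimensional support on $\cM_\theta$ carries no such presentation. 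Nor does your Ext-vanishing extend to a general target $y\in\bF_{\sfZ_G}$: by the already-established part of the theorem, $\Phi^{-1}(y)$ has cohomology in two adjacent degrees rather than being a sheaf, so $\Hom(\Phi(\cF),y[-2])$ lands in $\Ext^1$ and $\Ext^2$ groups that do not vanish, and no contradiction results.

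The paper closes this step by a geometric argument absent from your proposal. Since the fibers of $\cM_\theta\to\C^3/G$ are at most one-dimensional away from the origin, $\Phi^2(\cF)$ has finite length supported at $0$, and the theorem on formal functions computes it as the inverse limit of $\rmH^2(Z,\cE_\theta\tensor p_{\cM_\theta}^*\cF\tensor\cO_Z/\cI_0^n)$, where $Z$ is the support of $\cE_\theta$. The decisive input is the set-theoretic identity $Z\times_{\C^3}\{0\}=Z\times_{\cM_\theta}E_0$ (a flat family of $\theta$-stable constellations is supported over $0$ exactly when its classifying map factors through $E_0$), which allows one to trade the $\cI_0$-adic completion for the $\cI_{E_0}$-adic one; each term $\rmH^2(Z,\cE_\theta\tensor p_{\cM_\theta}^*(\cF\tensor\cO_{\cM_\theta}/\cI_{E_0}^n))$ is then $\Phi^2$ of a sheaf supported on $E_0$, hence lies in $\bT_\theta$ by the $E$-side analysis you did carry out, and so does the limit. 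Without this reduction to $E_0$ (or some substitute for it), the first assertion of the theorem remains unproven for sheaves of full support, and you correctly flagged but did not resolve this: ``pushing the constellation-decomposition beyond finite length'' is not the issue --- $\Phi^2(\cF)$ is already finite length --- the issue is that it is not a quotient of $\Phi^0$ of anything finite length.
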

\begin{proof}
We will prove the second statement first.  It is clear that $\cF$ is supported on $E$ if and only if $\Phi^i(\cF)$ is supported on the origin for all $i$.  Hence $\Phi$ restricts to an equivalence 
\[ \Phi_0 : \bD_E(\cM_\theta) \to \bD_0(\C^n)^G.\]
Now, let us apply Lemma \ref{lem:mainHN}.  Since $\Phi$ is left exact and satisfies (GV) and (SV), the same is true for $\Phi_0$.  Observe that $\ell(\cF) = 0$ if and only if $\cF = 0$.  So the first and third conditions of Lemma \ref{lem:mainHN} are vacuous here.  The second condition follows from the fact that $\Phi(\cO_p)$ is a $\theta$-stable $G$-equivariant sheaf.  Hence, $\alpha_{\Phi_0}$ is a tilt away from $\beta_{\sfZ_G}$.  We turn to Lemma \ref{lem:reverseHN}.  Now, if $\cF$ is a sheaf on $E$ with $\rk_E(\cF) =0$ then $\dim(\cF) \leq 1$.  Hence $\Phi^2(\cF) = 0$ and $\Phi^1(\cF) \in \bT_{\sfZ_G}$.  Hence (1) of Lemma \ref{lem:reverseHN} is satisfied.  Condition (2) is satisfied by construction of $\beta_{\sfZ_G}$.  Finally, we must check condition (3).  So assume that $\cF$ is a sheaf on $\cM$ supported on $E$ such that $\theta(\cF) = 0$ and $\Phi(\cM) \in \cA_{\beta_{\sfZ_G}}$.  Then $\Phi^0(\cF) \in \bF_{\sfZ_G}$, $\Phi^1(\cF) \in \bT_{\sfZ_G}$, and $\Phi^2(\cF) = 0$.  Therefore $\theta(\cF) = \theta( \Phi^0(\cF) ) - \theta( \Phi^1(\cF) )$.  Since $\theta$ is non-positive on $\bF_{\sfZ_G}$ and positive on $\bT_{\sfZ_G}$ we see that in fact $\theta( \Phi^0(\cF) ) = \theta( \Phi^1(\cF) ) = 0$ and therefore $\Phi^1(\cF) = 0$.  Moreover, $\Phi^0(\cF)$ is $\theta$-semistable and therefore there are finitely many $p \in E$ such that $\Hom(\Phi^0(\cF),\Phi^0(\cO_p)) = \Hom(\cF,\cO_p) \neq 0$.  This implies that $\cF$ is a sheaf of finite length.

Now we observe that we can adapt the argument of \ref{lem:mainHN} to show that for any sheaf $\cF$ on $\cM$, $\Phi^0(\cF) \in \bF_\theta$.  It remains to show that for any sheaf $\cF$, $\Phi^2(\cF) \in \bT_\theta$.  To this end we note that the fibers of $\cM_\theta \to \C^3/G$ are at most one-dimensional away from the origin.  Therefore, for any sheaf $\cF$, $\Phi^2(\cF)$ is supported at the origin.  We recall that $\Phi^2(\cF) = \bR^2 p_{\C^3 *}( \cE_\theta \tensor p_{\cM_\theta}^*\cF )$.  The support of $\cE_\theta$ is projective over $\C^3$.  Let $Z \subset \C^3 \times \cM_\theta$ be the support of $\cE_\theta$.  Then $Z$ is finite over $\cM_\theta$ and projective over $\C^3$.  Moreover, we claim that $Z \times_{\C^3} \{0\} = Z \times_{\cM_\theta} E_0$ as closed sets, where $E_0$ is the (reduced) exceptional divisor over $0$ in $\C^3/G$.  Indeed, if $\cE$ is a $\theta$-stable $G$-constellation on $T \times \C^3$ with support $Z_T$ then $Z_T \subset T \times \{0\}$ if and only if the induced map $T \to \cM_\theta$ factors through $E_0 \to \cM_\theta$. 

Now, we return to $\Phi^2(\cF)$.  Let $\cI_0$ be the ideal of zero in $\C^3$.  Since $\Phi^2(\cF)$ has finite length and is supported at $0$ it is $\cI_0$-adically complete and the Theorem on formal functions (see e.g. \cite{Ha}) implies that
\[ \Phi^2(\cF) = \invlim{\, \rmH^2(Z, \cE_\theta \tensor p_{\cM_\theta}^*\cF \tensor \cO_Z/\cI_0^n ) }.\] 
On the other hand if $\wh{Z}$ is the completion of $Z$ along $Z \times_{\C^3} \{0\} = Z \times_{\cM_\theta} E_0$ and $\cI_{E_0}$ is the ideal sheaf of $E_0$ then
\[ \invlim{\, \rmH^2(Z, \cE_\theta \tensor p_{\cM_\theta}^*\cF \tensor \cO_Z/\cI_0^n ) } = \rmH^2(\wh{Z}, \cE_\theta \tensor p_{\cM_\theta}^*\cF \tensor \cO_{\wh{Z}} ) =  \invlim{\, \rmH^2(Z, \cE_\theta \tensor p_{\cM_\theta}^*(\cF \tensor \cO_{\cM_\theta}/\cI_{E_0}^n) ) }. \] 
For each $n$, $\cE_\theta \tensor p_{\cM_\theta}^*(\cF \tensor \cO_{\cM_\theta}/\cI_{E_0}^n)$ is supported on $E_0$ and hence $\rmH^2(Z, \cE_\theta \tensor p_{\cM_\theta}^*(\cF \tensor \cO_{\cM_\theta}/\cI_{E_0}^n) ) \in \bT_\theta$.  We conclude that $\Phi^2(\cF) \in \bT_\theta$. 
\end{proof}

\begin{rmk}  
Suppose that $G \subset \SL_2(\C)$.  We can think of $G$ as a finite subgroup of $\SL_3(\C)$ for example by using a splitting $\C^3 = \C^2 \oplus \C$ and having $G$ act trivially on the second factor.  Then $\C^3/G \cong \C^2/G \times \C$ is a transverse singularity.  In this situation the t-structures $\alpha_\Phi$ and $\beta_\pi$ of Theorem \ref{thm:main.McKay} agree. 
\end{rmk}

\begin{rmk}
In the three dimensional derived McKay correspondence, the t-structure $\alpha_\Phi$ on $\bD(\C^3)^G$ can be very nontrivial.  One guess for how to describe this t-structure explicitly would be to adapt the construction of perverse (coherent) sheaves and attempt to define $\alpha_\Phi$ by restricting the possible cohomologies.  This is especially appealing in light of the results in \cite{CCL}.  However, it turns out that this is not generally the right description.  Consider $G = \mu_3$, the center of $\SL_3(\C)$.  Then $X=G\Hilb(\C^3)$ is naturally isomorphic to the blow-up of $\C^3/G$ at the singular point.  It can then be identified with the total space of $\omega_{\P^2}$.  We will show that there do not exist full subcategories $\cat{A}_0,\cat{A}_1,\cat{A}_2$ of $\coh(\C^3)^G$ such that $\cF^\bt \in \bD(\C^3)^G$ has the form $\cF^\bt = \Phi(\cG)$ if and only if $\sfH^i(\cF^\bt) \in \cat{A}_i$ for $i=0,1,2$ and $\sfH^i(\cF^\bt) = 0$ for $i \neq 0,1,2$.  If this were the case then the full subcategory $\Phi(\coh(X))$ would be closed under taking cohomology sheaves in the sense that if $\cE^\bt \in \Phi(\coh(X))$ then $\cH^0(\cE^\bt),\cH^1(\cE^\bt)[-1]$ and $\cH^2(\cE^\bt)[-2]$ all belong to $\Phi(\coh(X))$ as well.

Viewing $X$ as the total space of $\omega_{\P^2}$, let $E \cong \P^2$ be the zero section.  Then if $\cE$ is the universal $G$-cluster on $\C^3 \times X$, we identify $pr_{X *} \cE$ as $p^*(\cO \oplus \cO(1) \oplus \cO(2))$ where $p:X \to \P^2$ is the line bundle structure map.  Now let $p \in E$ be a point and $\cI_p \subset \cO_E$ the ideal sheaf on $E$ of $p$.  From the exact sequence
\[ 0 \to \cI_p(-3) \to \cO_E(-3) \to \cO_p \to 0,\] 
we see that $\Phi^0(\cI_p(-3)) = 0$ while $\Phi^1(\cI_p(-3)) = \Phi^0(\cO_p)$.  If $\Phi(\coh(X))$ were closed under taking cohomology then it would have to contain both $\Phi^0(\cO_p)$ and $\Phi^0(\cO_p)[-1]$.  This is impossible since $\Phi(\coh(X)) \cap \Phi(\coh(X))[-1] = \{0\}$. 
\end{rmk}

\section{Tilting equivalences}
Let $X$ be a variety.  A \emph{tilting bundle} $\cE$ on $X$ is a vector bundle such that (i) $\Ext^i(\cE,\cE) = 0$ for $i > 0$, and (ii) the zero sheaf is the only sheaf $\cF$ such that $\Ext^i(\cE,\cF) = 0$ for all $i$.  A tilting bundle gives rise to a tilting equivalence \cite{BvdB}.  This is a pair of inverse equivalences
\[ 
\xymatrix{
\bD(X) \ar@<2pt>[r]^{\bR\Hom(\cE,?)} & \ar@<2pt>[l]^{? \tensor^{\bL} \cE} \bD(\sfA) 
}
\]
where $\sfA = \End(\cE)$.  

The notion of a tilting bundle is related to the notion of a full exceptional sequence.  An object $\cF \in \bD(X)$ is \emph{exceptional} if $\End(\cF)=\bk$ and $\Ext^i(\cF,\cF)=0$ for $i \neq 0$.  A \emph{full exceptional sequence} is a sequence $\cF_1,\dotsc,\cF_n$ such that each $\cF_i$ is exceptional and $\Ext^i(\cF_j,\cF_k)=0$ whenever $j > k$, and the smallest thick subcategory of $\bD(X)$ containing $\cF_1,\dotsc,\cF_n$ is $\bD(X)$.  Finally, a full exceptional sequence $\cF_1,\dotsc,\cF_n$ is \emph{strong} if in addition $\Ext^i(\cF_j,\cF_k) = 0$ for all $i \neq 0$.  If $\cF_1,\dotsc,\cF_n$ is a full, strong exceptional sequence consisting of vector bundles then $\cE = \cF_1 \oplus \dotsm \oplus \cF_n$ is a tilting bundle.  See \cite{Bo} for a discussion of tilting in this special case.

We will investigate the structure of these equivalences in the case where $X$ is a surface.  It is known that every rational surface admits a tilting bundle \cite{HP}.  However, the converse is a well-known open question:

\begin{openquestion}
Is every a smooth projective surface which admits a tilting bundle rational?
\end{openquestion}

Let $X$ be a smooth projective surface with a tilting bundle $\cE$ and set $\sfA = \End(\cE)$.  Write $\Phi:\bD(X) \to \bD(\sfA)$ for $\bR\Hom(\cE,?)$.  Denote the length function on $\rmod\sfA$ by $\ell$.  Consider a weak central charge $\sfZ = \theta + i \ell$ on $\rmod\sfA$.  We will assume that $\cE$ does not have repeated indecomposable summands so that every simple $\sfA$-module is one dimensional.  Then the isomorphism classes of one-dimensional simple modules are in bijection with indecomposable idempotents of $\sfA$ and form a basis for $K_0(\sfA)$, the Grothendieck group of \emph{finite dimensional modules}.  Let $e_1,\dotsc,e_m$ be the indecomposable idempotents of $\sfA$ and $S_1,\dotsc,S_m$ the corresponding simple modules.  Then given a finite dimensional $\sfA$-module $M$ the class of $M$ in $K_0(\sfA)$ is $\sum_{i=1}^m{ \dim_\bk(M e_i) [S_i] }$.  We call the tuple $\dim(M) = ( \dim_\bk(Me_i) )$ the dimension vector of $M$.  So we can regard $\sfZ$ as a complex valued function on the set of integral dimension vectors.   We say that $\sfZ$ is \emph{compatible} if for each point $p \in X$, $\Phi(\cO_p)$ is a $\theta$-stable representation of $\sfA$ with $\theta(\Phi(\cO_p)) = 0$.  Let $\alpha$ and $\beta$ be the standard t-structures on $\bD(X)$ and $\bD(\sfA)$, respectively.

\begin{thm} \label{thm:main.tilting}
If $\rmod\sfA$ admits a weak central charge $\sfZ = \theta + i \ell$ compatible with $X$, then $X$ is rational.  Moreover, if $\sfZ_X = -\theta + i \rk$ then $\Phi$ identifies $\alpha_{\sfZ_X}[1]$ with $\beta_{\sfZ}$.  
\end{thm}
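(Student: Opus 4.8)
The plan is to feed $\Phi=\bR\Hom(\cE,?)$ into Lemmas \ref{lem:mainHN} and \ref{lem:reverseHN}, deducing the tilt identification first and then harvesting the rationality statement from the weak central charge it produces.

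First I would record that, since $X$ is a smooth projective surface and $\cE$ is locally free, $\Phi$ is left exact, satisfies (GV) and (SV), and has $\Phi^i(\cF)=\Ext^i(\cE,\cF)=0$ for $i>2$. I then apply Lemma \ref{lem:mainHN} with $W=X$ and $\sfZ=\theta+i\ell$ on $\rmod\sfA$. Its hypotheses hold: condition (1) is vacuous because $\ell(M)=0$ forces $M=0$, so $\bF_\psi=\rmod\sfA$; condition (2) is exactly the compatibility of $\theta$, namely that $\Phi^0(\cO_p)$ is $\theta$-stable with $\theta(\Phi^0(\cO_p))=0$; and condition (3) is vacuous since $\sfZ(x)=0$ forces $\ell(x)=0$, i.e. $x=0$. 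Thus for every sheaf $\cF$ one has $\Phi^0(\cF)\in\bF_\sfZ$ and $\Phi^{\dim\cF}(\cF)\in\bT_\sfZ$. For a torsion sheaf ($\dim\cF\le 1$, so $\Phi^2(\cF)=0$) the real part $-\theta$ of $\sfZ_X=-\theta+i\rk$ is then nonnegative, so $\sfZ_X$ is a weak central charge on $\coh(X)$.

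For the identification of tilts I would compare t-structures on $\bD(\sfA)$. The discussion following Lemma \ref{lem:reverseHN}, applied with $(\bT_\sfZ,\bF_\sfZ)$, shows every $\Phi(\cF)$ has cohomology with respect to $\bD(\sfA)_\sfZ$ concentrated in degrees $0,1$; hence $\Phi(\coh(X))$ is the upper tilt of $\bD(\sfA)_\sfZ^\hrt$, and by the symmetry of tilting (Lemma \ref{lem:onetilt}) the heart $\bD(\sfA)_\sfZ^\hrt$ has cohomology in degrees $-1,0$ with respect to the standard t-structure transported through $\Phi$. Transporting back, $\Phi^{-1}(\bD(\sfA)_\sfZ^\hrt)$ has ordinary cohomology in degrees $-1,0$, which is the hypothesis of Lemma \ref{lem:reverseHN}. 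I then verify its three conditions for $\sfZ_X$ (with $-\theta$ in the role of the additive function): torsion sheaves land in the heart because for them $\Phi^0\in\bF_\sfZ$, $\Phi^1\in\bT_\sfZ$ and $\Phi^2=0$; $-\theta$ is nonnegative on the heart because $\theta\le 0$ on $\bF_\sfZ$ and $\theta\ge 0$ on $\bT_\sfZ$; and if a genuine sheaf $\cF$ lies in the heart with $\theta(\cF)=0$ then $\theta(\Phi^0\cF)=\theta(\Phi^1\cF)=0$ forces $\Phi^1(\cF)=0$, after which $\Phi^0(\cF)$ is $\theta$-semistable of slope $0$ and $\Hom(\cF,\cO_p)=\Hom(\Phi^0\cF,\Phi^0\cO_p)\neq 0$ for infinitely many $p$ would contradict its finite length unless $\cF$ is torsion --- the same mechanism as in the proof of Theorem \ref{thm:main.McKay}. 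Lemma \ref{lem:reverseHN} then gives $\Phi^{-1}(\bD(\sfA)_\sfZ)=\bD(X)_{\sfZ_X}^\bt[1]$, the asserted identification.

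For rationality I would first observe that $\bD(X)\cong\bD(\sfA)$ with $\sfA$ finite dimensional forces $K(\coh(X))\cong\Z^m$; together with derived invariance of Hochschild homology (which vanishes in negative degrees for a ring) this yields $h^{1,0}=h^{2,0}=0$, i.e. $q(X)=p_g(X)=0$. Next I extract positivity from $\sfZ_X$: Riemann--Roch writes $\theta(\cF)=\chi(\xi,\cF)$ for a class $\xi$ in the rational Grothendieck group, and $\theta(\cO_p)=0$ forces $\rk(\xi)=0$, so $\theta(\cF)=a\,\rk(\cF)-D\cdot c_1(\cF)$ with $D=c_1(\xi)$; the weak central charge inequality on rank-zero sheaves gives $D\cdot c_1(\cF)\ge 0$ for all of them, i.e. $D$ is nef. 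Since $q(X)=0$, by the Enriques--Castelnuovo classification it remains only to prove $\kappa(X)=-\infty$, equivalently (duality of the nef and pseudoeffective cones on a surface) to exhibit a nef class $N$ with $K_X\cdot N<0$. The main obstacle is precisely this: showing $K_X\cdot D<0$. The nef class $D$ is already in hand, but ruling out $K_X\cdot D\ge 0$ requires genuinely using the \emph{stability} of the objects $\Phi^0(\cO_p)$, not merely $\theta(\cO_p)=0$; the weaker tilting data alone yields only $\Hom(\cE,\cE\tensor\omega_X)=\Ext^2(\cE,\cE)^\vee=0$, hence $p_g=0$, but not the vanishing of higher plurigenera. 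I would attempt to promote $D$ to a witness of non-pseudoeffectivity of $K_X$ by relating $D\cdot K_X$ to the $\theta$-slopes of the subobjects that would destabilize twists of $\Phi^0(\cO_p)$, or by identifying $X$ with a connected component of the King moduli space of $\theta$-stable $\sfA$-modules and arguing uniruledness there. As a sanity check, for the Beilinson bundle on $\P^2$ one gets $D=2H$ and $K_X\cdot D=-6<0$, consistent with the claim.
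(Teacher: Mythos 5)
Your second paragraph --- the identification of $\bD(X)^\bt_{\sfZ_X}[1]$ with $\bD(\sfA)^\bt_{\sfZ}$ --- is correct and is, in substance, the paper's own proof: Lemma \ref{lem:mainHN} with conditions (1) and (3) vacuous because $\ell$ vanishes only on $0$ and condition (2) given by compatibility, followed by Lemma \ref{lem:reverseHN}, whose condition (3) you check exactly as the paper does, via $\theta(\Phi^0(\cF))=\theta(\Phi^1(\cF))=0$, semistability of $\Phi^0(\cF)$, and finiteness of the set of $p$ with $\Hom(\cF,\cO_p)\neq 0$. Your observation that $-\theta \geq 0$ on torsion sheaves, making $\sfZ_X$ a weak central charge, also matches the paper.

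The rationality half, however, has a genuine gap --- the one you flag yourself: you never establish $D\cdot K_X < 0$, and the proposed workarounds (destabilizing twists, embedding $X$ in a King moduli space) are not carried out. The missing idea in the paper is a Serre-duality twist by $\omega_X$, and it requires no stability input beyond what you already proved. Since $\cE$ is tilting, $\Ext^i(\cE,\cE\tensor\omega_X)\cong \Ext^{2-i}(\cE,\cE)^\vee$, so $\Phi(\cE\tensor\omega_X)=\sfA^\vee[-2]$; the conclusion $\Phi^2(\coh(X))\subset\bT_\sfZ$ of Lemma \ref{lem:mainHN}, applied to the two-dimensional sheaf $\cE\tensor\omega_X$, then gives $\sfA^\vee\in\bT_\sfZ$, hence $\theta(\cE\tensor\omega_X)=\theta(\sfA^\vee)>0$, while $\Phi(\cE)=\sfA\in\bF_\sfZ$ gives $\theta(\cE)\leq 0$. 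Because $\theta(\cO_p)=0$, $\theta$ descends to $K(X)/K(X)_{\leq 0}$, and $\alpha(D')=\theta(\cE\tensor\cO(D'))-\theta(\cE)=\theta(\cE\tensor\cO_{D'})$ defines a homomorphism $\Cl(X)\to\Z$ that is $\leq 0$ on effective divisors (for $D'$ effective, $\cE\tensor\cO_{D'}$ is torsion, so $\Phi(\cE\tensor\cO_{D'})$ lies in $\bD(\sfA)^\hrt_\sfZ$, where $\theta\leq 0$) yet satisfies $\alpha(\omega_X)>0$. In your notation $\alpha(D')=-\rk(\cE)\,D\cdot D'$, so this is precisely the inequality $D\cdot K_X<0$ you were missing; extending $\alpha$ to $\mathrm{NS}(X)\tensor_\Z\R$ shows $K_X$ is not pseudoeffective, so $\kappa(X)=-\infty$, and with $q(X)=0$ (your Hochschild-homology argument is a legitimate substitute for the paper's remark that $\cO_X$ is a summand of $\sEnd(\cE)$ with $\rmH^{i}(\sEnd(\cE))=0$ for $i>0$; the paper never needs $p_g$ separately) the Enriques--Kodaira classification gives rationality. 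In short, your instinct that stability of the modules $\Phi^0(\cO_p)$ must be invoked a second time was misplaced: the positivity needed at $\omega_X$ is already encoded in the membership $\Phi^2(\cF)\in\bT_\sfZ$, which the first half of your own argument supplies.
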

\begin{proof}
We prove the second claim first.  Note that the tilting equivalence $\Phi$ is left exact and satisfies GV and SV.  So we will apply Lemma \ref{lem:mainHN} with $Z = X$. Now since $\ell(x) = 0$ if and only if $x = 0$ conditions (1) and (3) of the Lemma are vacuous.  Condition (2) holds by assumption in this case.  Hence $\Phi^0(\cF) \in \bF_\sfZ$ and $\Phi^2(\cF) \in \bT_\sfZ$ for any coherent sheaf $\cF$ on $X$.  Next, we apply Lemma \ref{lem:reverseHN} to $\beta_\sfZ$.  Since a torsion sheaf has dimension at most one, if $\cF$ is torsion then $\Phi(\cF) \in \cA_{\beta_\sfZ}$.  By construction $-\theta$ is non-negative on $\cA_{\beta_\sfZ}$.  So we have to check that if $\Phi(\cF) \in \cA_{\beta_\sfZ}$ and $\theta(\cF)=0$ then $\cF$ is torsion.  In fact, if $\Phi(\cF) \in \cA_{\beta_\sfZ}$ then $\theta(\cF) = \theta(\Phi^0(\cF)) - \theta(\Phi^1(\cF))$.  Now, since $\Phi^0(\cF) \in \bF_\sfZ$, $\theta(\Phi^0(\cF)) \leq 0$ and since $\Phi^1(\cF) \in \bT_\sfZ$, if $\Phi^1(\cF) \neq 0$ then $\theta(\Phi^1(\cF)) > 0$.  So we conclude that $\Phi^1(\cF) = 0$ and $\theta(\Phi^0(\cF))=0$.  This implies that $\Phi^0(\cF)$ is semistable and therefore there are only finitely many $p \in X$ such that $\Hom(\Phi^0(\cF),\Phi^0(\cO_p))=\Hom(\cF,\cO_p)$ is nonzero.  Hence $\cF$ has finite support and in particular it is torsion.

We now establish the rationality of $X$.  Let $K_0(X)_{\leq 0}$ be the subgroup of $K_0(X)$ generated by the classes of sheaves of finite length.  Consider the morphism $\Div(X) \to K_0(X)/K_0(X)_{\leq 0}$ defined by $D \mapsto [ \cE \tensor \cO(D) ]$, for $D$ effective.  We observe that if $D$ is effective, then $[\cE \tensor \cO(D) ] = [\cE] + [\cE \tensor \cO_D]$ and in $K_0(X)/K_0(X)_{\leq 0}$, $[ \cE \tensor \cO_D] = \rk(\cE) [ \cO_D ]$.  Now the map $D \mapsto [ \cO_D ]$ defines an injective homomorphism $\Cl(X) \to K_0(X)/K_0(X)_{\leq 0}$.  Since $\theta(\cO_p) = 0$ for any point $p \in X$, $\theta$ defines a function on $K_0(X)/K_0(X)_{\leq 0}$.  We see that $\alpha( D ) = \theta( \cE \tensor \cO(D) ) - \theta(\cE )$ defines a homomorphism $\alpha:\Cl(X) \to \Z$.

Now we observe that for any effective divisor $D$, $\Phi(\cE \tensor \cO_D)$ belongs to $\cA_{\beta_{\sfZ}}$.  Hence $\theta( \cE \tensor \cO_D ) \leq 0$.  On the other hand $\Phi(\cE) = \Phi^0(\cE) = \sfA[0]$ and $\Phi(\cE \tensor \omega_X) = \Phi^2(\cE \tensor \omega_X)[-2] = \sfA^\vee[-2]$.  Therefore $\Phi(\cE) \in \bF_{\sfZ}$ and $\Phi(\cE \tensor \omega_X) \in \bT_{\sfZ}[2]$.  Hence $\theta(\cE) < 0$ while $\theta(\cE \tensor \omega_X) \geq 0$. We conclude that $\alpha(\omega_X) > 0$.

Since $K_0(X) \cong K_0(\sfA)$ it is free of finite rank and $K_0(X)_{\leq 0} = \Z\cdot [\cO_p]$ for any point $p \in X$.  Thus, $\mathrm{NS}(X) \cong K_0(X)_{\leq 1}/K_0(X)_{\leq 0} \tensor_\Z \R$, where $K_0(X)_{\leq 1}$ is the subgroup generated by sheaves of dimension at most 1.  We extend $\alpha$ to a linear map $\mathrm{NS}(X) \to \R$ and note that since $\alpha \leq 0$ on effective divisors but $\alpha(\omega_X) > 0$ the canonical divisor of $X$ is not pseudoeffective.  In particular the Kodaira dimension of $X$ is $-\infty$.  Next we note that since $\cO_X$ is a summand of $\sEnd(\cE)$ and $\rmH^i(\sEnd(\cE))=0$ for $i > 0$ the irregularity of $X$ is zero.  By the Kodaira-Enriques classification of surfaces, $X$ is rational.
\end{proof}

It is not known if a compatible weak central charge always exists.  Bergman and Proudfoot studied the problem in \cite{BP} with the aim of giving a GIT construction of any variety that admits a tilting bundle.  They use the term `great' where we use the term compatible.  In general the question of whether a set of modules can all be made stable is very subtle.  For example, this can be impossible if we consider partial tilting bundles, that is, vector bundles satisfying (i) but not (ii) in the definition.  
\begin{example}[Lutz Hille]
Let $B_q \P^2$ be the blow-up of $\P^2$ at $q$ and let $X$ be the blow-up of $B_q \P^2$ at a point on the exceptional divisor $E_1$ of $B_q \P^2 \to \P^2$.  Let $f:X \to B_q\P^2$ be the blowing-up map, $E_2$ the exceptional divisor and $E'_1$ the strict transform of $E_1$.  Then the cohomology class in $\rmH^1( \cO(E'_1) )$ defines an exact sequence
\[ 0 \to f^*\cO(E_1) \to \cE \to \cO(E_2) \to 0.\] 
The vector bundle $\cE$ satisfies $\Ext^i(\cE,\cE) = 0$ for $i = 1,2$ and $\End(\cE) \cong \bk[x]/(x^2)$.  The algebra $\bk[x]/(x^2)$ has one simple module and therefore the only module that is ever $\theta$-stable for some $\theta$ is the simple module.
\end{example}

If $\cE$ is a tilting bundle then this type of pathology cannot occur.  Indeed, for $p,q \in X$, 
\[\Hom_\sfA( \Hom(\cE,\cO_p),\Hom(\cE,\cO_q)) = \begin{cases} \bk & p = q, \\ 0 & p \neq q. \end{cases}\] 
Hence there is no proper quotient module of $\Hom(\cE,\cO_p)$ that ever appears as a submodule of $\Hom(\cE,\cO_q)$ for any $q$.  For a discussion of stable quiver representations with many interesting examples including examples of modules with trivial endomorphism ring that cannot be made stable, see \cite{Re}.  

We now turn to the tilting bundles constructed by Hille and Perling, to show that these bundles do fit into the framework of Theorem \ref{thm:main.tilting}.  They are defined inductively starting with a full strong exceptional sequence of line bundles on a minimal rational surface.  We will describe some of the features of their construction and refer the reader to \cite{HP} for details.  Suppose that $X$ is a rational surface and 
\[ X = X_n \stackrel{f_n}{\to} X_{n-1} \stackrel{f_{n-1}}{\to} \dotsm \stackrel{f_2}{\to} X_1 \stackrel{f_1}{\to} X_0\] 
is a sequence of blow-ups constructing $X$ from a minimal rational surface $X_0$.  Hille and Perling use this data to construct tilting bundles $\cE_i$ on $X_i$.  For each $i$ let $E_i$ be the exceptional divisor of $f_i$.  Then 
\[ \Ext^2(\cO(E_i),f_i^*\cE_{i-1}) = 0, \quad \text{and} \quad \Ext^\bt(f_i^*\cE_{i-1},\cO(E_i)) = 0\] 
but $\Ext^1(\cO(E_i),f_i^*\cE_{i-1}) \neq 0$.  More precisely Hille and Perling construct $\cE_{i-1}$ so that it has a unique indecomposable summand $\cE_{i-1}'$ such that $\Ext^1(\cO(E_i),f_i^*\cE_{i-1}') \neq 0$ and moreover this Ext group is one dimensional.  So there is also a unique extension
\[ 0 \to f^*\cE_{i-1}' \to \cF_i \to \cO(E_i) \to 0.\] 
Then they put $\cE_i = f^*\cE_{i-1} \oplus \cF_i$ and show that it is a tilting bundle on $X_i$.  

\begin{thm} \label{thm:HPexample}
Let $X$ be a rational surface and let $\cE$ be one of Hille and Perling's tilting bundles.  Then $\sfA = \End(\cE)$ admits a compatible weak central charge.
\end{thm}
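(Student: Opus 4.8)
The plan is to induct on the number $n$ of blow-ups in the tower $X=X_n\to\dotsm\to X_0$, producing at each stage $i$ a weight function $\theta_i$ on the indecomposable summands of $\cE_i$ for which the equivalence $\Phi_i=\bR\Hom(\cE_i,?)$ sends every point module to a $\theta_i$-stable module. First I would record the two things that make this a finite problem. Because $\cE_i$ is locally free, $\Phi_i(\cO_p)=\Hom(\cE_i,\cO_p)$ is concentrated in degree zero with dimension vector $\dim\Phi_i(\cO_p)=(\rk\cE_i^{(k)})_k$, \emph{independent of $p$}. Writing $\theta_i$ as a tuple of weights on the summands, compatibility becomes the normalization $\theta_i(\dim\Phi_i(\cO_p))=\sum_k\theta_{i,k}\rk(\cE_i^{(k)})=0$ together with the strict positivity $\theta_i(N)>0$ for every proper nonzero submodule $N\subset\Phi_i(\cO_p)$ and every $p$; since the submodules of $\Phi_i(\cO_p)$ are controlled by which structure maps $\Hom(\cE_i^{(b)},\cO_p)\to\Hom(\cE_i^{(a)},\cO_p)$ are nonzero, this is a finite system of linear inequalities in the weights.

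For the base case, on the minimal surface $X_0$ the bundle $\cE_0=\oplus_k\cO(D_k)$ is a full strong exceptional collection of line bundles, so $\Phi_0(\cO_p)$ is the thin representation with a one-dimensional space at each vertex. Its submodules are exactly the subsets of vertices closed under the nonzero arrows, and since the differences $D_b-D_a$ carrying the arrows are globally generated the corresponding maps are nonzero at every $p$, so this poset is independent of $p$. One then solves the resulting linear feasibility problem directly: order the summands compatibly with the arrows and choose weights strictly positive on every proper closed subset, renormalized so that $\sum_k\theta_{0,k}=0$ (as in the weights $1,1,-2$ for the Beilinson quiver of $\P^2$).

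For the inductive step, suppose $\theta_{i-1}$ is compatible on $\sfA_{i-1}$. The summands of $\cE_i=f_i^*\cE_{i-1}\oplus\cF_i$ are the pullbacks of the old summands together with the single new summand $\cF_i$, so $\sfA_i$ has exactly one more vertex $v_F$ than $\sfA_{i-1}$; moreover $\bR f_{i*}\cO_{X_i}=\cO_{X_{i-1}}$ gives $\Hom(f_i^*\cE^a,f_i^*\cE^b)=\Hom(\cE^a,\cE^b)$, so the full subquiver on the old vertices is that of $\sfA_{i-1}$. I would analyze the two kinds of point separately. For $p\notin E_i$, mapping isomorphically to $q\in X_{i-1}$, the old-vertex part of $\Phi_i(\cO_p)$ is $\Phi_{i-1}(\cO_q)$ and the component at $v_F$ is pinned by the extension $0\to f_i^*\cE_{i-1}'\to\cF_i\to\cO(E_i)\to0$; for $p\in E_i$ one computes $\Phi_i(\cO_p)$ directly from this extension together with $\cO(E_i)|_{E_i}=\cO_{\P^1}(-1)$ and the triviality of $f_i^*\cE_{i-1}|_{E_i}$. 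In both cases I would extend $\theta_{i-1}$ by giving $v_F$ a weight of appropriate sign and then correcting the old weights to restore $\theta_i(\dim\Phi_i(\cO_p))=0$; because the old-vertex inequalities are strict and every genuinely new submodule meets $v_F$, the aim is to choose the new weight and the correction so that all inequalities, old and new, stay strict.

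The crux, and where I expect the real work to lie, is this inductive step at the points $p\in E_i$: one must produce the weight at $v_F$ making every new potential destabilizer strictly positive --- in particular those coming from $f_i^*\cE_{i-1}'\hookrightarrow\cF_i$ and $\cF_i\twoheadrightarrow\cO(E_i)$ --- while keeping the correction to the old weights small enough that the strict inequalities inherited from $\theta_{i-1}$ survive, i.e.\ feasibility of the enlarged linear system. I expect the decisive input to be the vanishings $\Ext^\bullet(f_i^*\cE_{i-1},\cO(E_i))=0$ and $\Ext^2(\cO(E_i),f_i^*\cE_{i-1})=0$ together with the one-dimensionality of $\Ext^1(\cO(E_i),f_i^*\cE_{i-1}')$: these pin down exactly which arrows touch $v_F$, and hence which new submodules can occur, so that a single weight at $v_F$ stabilizes all of the new point modules at once and the induction closes.
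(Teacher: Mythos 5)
There is a genuine gap, and it sits exactly where you flag it: the inductive step at points $p \in E_i$ is never carried out, and as set up it is unlikely to close. Your inductive hypothesis --- that \emph{some} compatible $\theta_{i-1}$ exists --- is too weak to propagate: nothing guarantees that an arbitrary compatible weight on $\sfA_{i-1}$ extends after perturbation, since the new point modules $\Phi_i(\cO_p)$, $p \in E_i$, impose inequalities that a given $\theta_{i-1}$ may already violate on their old-vertex parts (all of these sit over the single blown-up point $q_0$, a degenerate configuration from the viewpoint of $\theta_{i-1}$). Moreover, once the summand $\cF_i$ has rank $\geq 2$ the point modules are no longer thin, so the submodule lattice is not read off from ``which arrows are nonzero,'' and the Ext-vanishings you invoke pin down the arrows touching $v_F$ but not the dimension vectors of destabilizing submodules. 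So the ``finite linear feasibility'' you appeal to is finite but its feasibility is precisely the content of the theorem; your proposal restates the problem at the crucial point rather than solving it.

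The paper avoids this by strengthening the induction to a structural property rather than inducting on weights: there is a single line-bundle summand $\cL$, pulled back from the minimal model, such that $\Hom(\cE,\cO_p)$ is generated as an $\sfA$-module by the one-dimensional space $\Hom(\cL,\cO_p)$ for \emph{every} $p \in X$. Granting this, one explicit weight stabilizes all point modules simultaneously ($\theta = \dim_\bk M - 1$ at the vertex of $\cL$ and $-1$ elsewhere --- the Bergman--Proudfoot mechanism you essentially rediscover in your base case), with no perturbation problem at all. The inductive step for this generation property is then soft: the cokernel $M$ of $\Hom(f^*g^*\cL,\cO_p)\tensor_\bk \sfA \to \Hom(\cE,\cO_p)$ is concentrated at the new vertex ($M = Me_\cF$), hence any one-dimensional quotient of $M$ must be the module $\Hom(\cO(E),\cO_q)$, which by the sequence $0 \to \Hom(\cO(E),\cO_q) \to \Hom(\cF,\cO_q) \to \Hom(f^*\cE'_{X'},\cO_q) \to 0$ (a consequence of $\cO(E)|_E \cong \cO_E(-1)$) is a \emph{submodule} of $\Hom(\cE,\cO_q)$ for every $q \in E$; if $M \neq 0$ this yields a nonzero map $\Phi(\cO_p) \to \Phi(\cO_q)$ with $q \neq p$, contradicting $\Hom(\Phi(\cO_p),\Phi(\cO_q)) = 0$ for distinct points, which holds because $\cE$ is a tilting bundle. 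So the decisive input is not the Ext-vanishings you name but the orthogonality of distinct point modules; if you want to salvage your plan, replace ``a compatible $\theta_{i-1}$ exists'' by ``point modules are cyclic, generated at the fixed vertex $\cL$'' as the statement you induct on.
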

\begin{proof}
Our approach is based on an idea of Bergman and Proudfoot (see \cite{BP}).  Let $\cE = \cE_1 \oplus \dotsm \oplus \cE_m$ be the decomposition of $\cE$ into indecomposable summands and let $e_1,\dotsc,e_m \in \sfA$ be the corresponding projectors.  Suppose that $M$ is an $\sfA$-module such that $M e_1$ is one dimensional and generates $M$.  Then $M$ is stable with respect to $\theta$ defined by
\[ \theta(S_1) = \dim_\bk(M) - 1, \quad \text{and} \quad \theta(S_i) = -1 \quad (i = 2,\dotsc,m).\] 
Indeed, since $M e_1$ generates $M$ it will generate every quotient.  So if $M \onto M''$ is a nonzero quotient then 
\[\theta(M'') = \dim_\bk(M) + 1 - \sum_{i=2}^m{ \dim_\bk(M'') } > \dim_\bk(M) + 1 - \sum_{i=2}^m{ \dim_\bk(M) } = 0.\] 

Suppose $X$ is minimal.  Then $\cE$ is the direct sum of the line bundles in a standard full strong exceptional collection.  It is straightforward to check that there is line bundle $\cL$ of $\cE$ such that for each point $p \in X$, $\Hom(\cE,\cO_p)$ is generated by $\Hom(\cL,\cO_p)$.  

Now, we proceed by induction on the Picard rank.  For a non-minimal rational surface $X$ constructed by blowing up a minimial surface we look at the last blow-up.  Suppose $f:X \to X'$ is the blow-up of a single point and that $\cE = f^*\cE_{X'} \oplus \cF$ as above, where there is an indecomposable summand $\cE_{X'}'$ of $\cE_{X'}$ and an exact sequence
\[ 0 \to f^*\cE'_{X'} \to \cF \to \cO(E) \to 0\] 
where $E$ is the exceptional divisor of $f$.  

Notice that since $\cO(E)|_E \cong \cO_E(-1)$ we see that $\Hom(f^*\cE_{X'},\cO) \to \Hom(f^*\cE_{X'},\cO(E))$ is an isomorphism.  Hence any map $f^*\cE_{X'} \to \cF$ has to factor through $f^*\cE'_{X'} \to \cF$ along $E$.  For $p \in E$ we have the exact sequence
\[ 0 \to \Hom(\cO(E),\cO_p) \to \Hom(\cF,\cO_p) \to \Hom(f^*\cE'_{X'},\cO_p) \to 0.\] 
Thus the one-dimensional subspace $\Hom(\cO(E),\cO_p) \subset \Hom(\cE,\cO_p)$ is in fact an $\sfA$-submodule. 

Let $g:X' \to X_0$ be the map to a minimal rational surface used to construct $\cE_{X'}$.  By induction, there is a line bundle summand of $\cE_{X_0}$ such that $\Hom(g^*\cL,\cO_q)$ generates $\Hom(\cE_{X'},\cO_q)$ for all $q \in X'$.  Then for any point $p \in X$ the submodule of $\Hom(\cE,\cO_p)$ generated by $\Hom(f^*g^*\cL,\cO_p)$ contains the submodule $\Hom(f^*\cE_{X'},\cO_p)$.  Let $M$ be the cokernel of $\Hom(f^*g^*\cL,\cO_p)\tensor_\bk \sfA \to \Hom(\cE,\cO_p)$.  If $e$ is the idempotent corresponding to the indecomposable summand $\cF$ of $\cE$ then we see that $M = M e$.  Hence $M$ admits a one dimensional quotient, which must be isomorphic to the one-dimensional submodule $\Hom(\cO(E),\cO_q) \subset \Hom(\cE,\cO_q)$ for any point $q \in E$.  This implies if $M \neq 0$ then there is a $q \in E$, $q \neq p$ and a nonzero map $\sfA$-module map $\Phi_\cE(\cO_p) = \Hom(\cE,\cO_p) \to \Phi_\cE(\cO_q) = \Hom(\cE,\cO_q)$.  However, since $\cE$ is a tilting bundle, $\Hom( \Phi_\cE(\cO_p),\Phi_\cE(\cO_q))=0$ if $p \neq q$.  We conclude that $\Hom(\cE,\cO_p)$ is always generated by the one dimensional space $\Hom(f^*g^*\cL,\cO_p)$ and therefore that there exists a weak central charge $\sfZ$ on $\rmod\sfA$ compatible with the tilting equivalence.
\end{proof}

We conclude with a result of independent interest.  If $X$ is a surface with a tilting bundle $\cE$ that decomposes as a direct sum of line bundles, then we can prove directly that it is rational.

\begin{thm} \label{thm:linebundles}
Let $X$ be a smooth projective surface with a tilting bundle $\cE$ that is a direct sum of line bundles.  Then $X$ must be rational.
\end{thm}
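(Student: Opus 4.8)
The plan is to deduce rationality from the Enriques--Kodaira classification via two inputs: a surface with $q = \dim \rmH^1(\cO_X) = 0$ is rational as soon as $\kappa(X) = -\infty$, and every non-rational surface with $q = 0$ has $\kappa(X) \ge 0$ and hence pseudoeffective canonical class. So it suffices to prove that $q = 0$ and that $K_X$ (the class of $\omega_X$) is not pseudoeffective. The first is immediate: writing $\cE = \bigoplus_{i=1}^m \cL_i$, the bundle $\sEnd(\cE) = \bigoplus_{i,j} \cL_i^\vee \otimes \cL_j$ has $\cO_X$ as a diagonal summand, and tilting forces $\rmH^k(\sEnd(\cE)) = \Ext^k(\cE,\cE) = 0$ for $k > 0$; thus $\rmH^1(\cO_X) = \rmH^2(\cO_X) = 0$, i.e.\ $q = p_g = 0$.

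Next I would organize the line bundles. Since $\cE$ generates $\bD(X)$, the classes $[\cL_i]$ form a $\Z$-basis of $K(X) \cong K(\sfA) \cong \Z^m$; as the first Chern class splits $L \mapsto [L] - [\cO_X]$, the group $\mathrm{Pic}(X) = \mathrm{NS}(X)$ is a summand of the free group $K(X)$ and is therefore torsion free. This already rules out surfaces with torsion canonical class, such as Enriques surfaces. Moreover the tilting vanishing $\Ext^k(\cL_i,\cL_j) = \rmH^k(\cL_j \otimes \cL_i^\vee) = 0$ for $k > 0$ shows that ``$\cL_j \otimes \cL_i^\vee$ is effective'' defines a strict partial order on the summands: torsion-freeness of $\mathrm{Pic}(X)$ excludes $2$-cycles, a sum of nonzero effective classes is never trivial so there are no longer cycles, and effectivity is additive so the relation is transitive. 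Extending to a total order and reindexing gives $\Hom(\cL_i,\cL_j) = \rmH^0(\cL_j \otimes \cL_i^\vee) = 0$ for $i > j$; that is, $\cL_1,\dots,\cL_m$ is a full strong exceptional sequence of line bundles.

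With this sequence I would invoke the numerical theory of Hille and Perling \cite{HP}. Setting $A_i = c_1(\cL_{i+1}) - c_1(\cL_i)$ for $1 \le i \le m-1$ and $A_m = c_1(\cL_1) - c_1(\cL_m) - K_X$, the strong exceptionality together with Serre duality (which makes the Euler matrix $\chi(\cL_i,\cL_j) = h^0(\cL_j \otimes \cL_i^\vee)$ a unimodular unitriangular Cartan matrix) yields a toric system: $\sum_{i=1}^m A_i = -K_X$, with $A_i \cdot A_{i+1} = 1$ for consecutive indices (cyclically) and $A_i \cdot A_j = 0$ otherwise. The remaining task is to extract $\kappa(X) = -\infty$ from these relations. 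I would play the cyclic positivity $A_i \cdot A_{i+1} = 1 > 0$ against the Hodge index theorem, under which the intersection form on $\mathrm{NS}(X)_\R$ (of rank $\rho = m - 2$) has signature $(1,\rho-1)$: the signature of the cyclic Gram matrix of the $A_i$ constrains $\sum A_i^2$ and thereby forces $-K_X = \sum A_i$ to pair positively with some nef class, i.e.\ produces a nef $N$ with $N \cdot K_X < 0$. Since the nef cone is dual to the pseudoeffective cone, this says exactly that $K_X$ is not pseudoeffective, and with $q = 0$ we conclude that $X$ is rational.

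The main obstacle is this last step. The classes $A_i$ need not be effective, so one cannot read positivity of $-K_X$ off directly and must argue purely lattice-theoretically from the signature of the cyclic intersection matrix, taking care to rule out the borderline $\kappa \ge 0$ minimal cases (where $K_X$ is nef with $K_X^2 \ge 0$, or numerically trivial) using the strict equalities $A_i \cdot A_{i+1} = 1$ together with the torsion-freeness of $\mathrm{Pic}(X)$. I do not expect the functional $\alpha$ of Theorem \ref{thm:main.tilting} to be available as a shortcut: constructing a functional that is nonpositive on effective classes and positive on $K_X$ is equivalent to exhibiting the desired nef class, so the argument must remain on the numerical side rather than passing through a compatible central charge.
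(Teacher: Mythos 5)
Your preliminary reductions are sound ($q=p_g=0$ from the two-sided vanishing $\Ext^{k}(\cE,\cE)=0$, torsion-freeness of $\Pic(X)$, and the ordering of the summands into a full strong exceptional sequence whose toric system satisfies the Hille--Perling relations), but the decisive step is exactly the one you leave open, and the route you propose for it provably cannot be completed. The data you permit yourself at the end --- the relations $A_i\cdot A_{i+1}=1$, $A_i\cdot A_j=0$, $\sum A_i=-K_X$, $\chi(\cO_X)=1$, Hodge index on $\mathrm{NS}(X)$ of rank $m-2$, and torsion-freeness of $\Pic(X)$ --- are consequences of (one-sided) exceptionality alone, and they are all realized on minimal surfaces of general type: determinantal Barlow surfaces are simply connected (so $\Pic$ is torsion free) with $p_g=q=0$, $K_X^2=1$, $\rho=9$, and carry exceptional sequences of $11=\rho+2$ line bundles (B\"ohning--Graf von Bothmer--Katzarkov--Sosna), whose associated ``toric system'' satisfies precisely your relations while $K_X$ is ample. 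These collections are not full (their complement is a phantom) and do not come from a tilting bundle, but your plan explicitly discards the non-numerical inputs --- fullness and the \emph{two-sided} $\Ext^{>0}$-vanishing --- by ``remaining on the numerical side,'' so no lattice-theoretic argument from the cyclic Gram matrix can produce a nef class negative on $K_X$.

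The paper's proof uses exactly the data you throw away. From $h^i(\cO(D_j-D_k))=0$ for $i=1,2$ and \emph{all} pairs $j,k$, the Riemann--Roch computation of Hille and Perling \cite{HP11} gives the additivity $h^0(\cO(D_k-D_i))=h^0(\cO(D_k-D_j))+h^0(\cO(D_j-D_i))$ whenever both terms on the right are positive. If every $h^0(\cO(D_j-D_i))\leq 1$, this forces the quiver of $\End(\cE)$ to have no paths of length two, so $\End(\cE)\cong \bk Q$ is hereditary --- contradicting the fact that the endomorphism algebra of a genuine tilting bundle on a surface has global dimension at least $2$ \cite{BF} (this is where fullness enters). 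Hence some $h^0(\cO(D_j-D_i))\geq 2$; a divisor $D$ in that linear system has $h^1(\cO_D)=0$ (again by the two-sided vanishing together with $h^{i}(\cO_X)=0$), so its components are rational curves, and the moving part covers $X$. Thus $X$ is birationally ruled over a curve $C$, and Orlov's blowup theorem \cite{Or92} together with the torsion-freeness of $K(X)\cong K(\sfA)$ forces $C=\P^1$. To repair your argument you would have to re-inject strongness and generation at the final stage in some such form; the numerical toric-system formalism by itself cannot distinguish $X$ from a Barlow surface.
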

\begin{proof}
Suppose that $\cE = \oplus_{i=1}^r \cO(D_i)$.  Since $\cE$ is a tilting bundle, $\rmH^i( \cO(D_j - D_k) ) = 0$ for all $j,k$ and $i = 1,2$.  Suppose that $i,j$ are such that $h^0(\cO(D_i - D_j) ) \neq 0$ and let $D \in | D_i - D_j |$.  Write $D = \sum_{i=1}^m{ C_i }$ where $C_i$ are distinct irreducible curves.  Since $h^i(\cO_X) = h^i(\cO(D)) = 0$ for $i > 0$ we see that $h^1( \cO_D ) = 0$.  Now, consider the exact sequence
\[ 0 \to \cO_D \to \bigoplus_{i=1}^m \cO_{C_i} \to \cG \to 0.\] 
Since $\dim(\cG) = 0$, we find that $h^1( \cO_{C_i} ) = 0$.  So $g_a(C_i) = 0$.  Thus if $\ol{C}_i \subset C_i$ is the reduced induced subscheme, $\ol{C}_i$ also has arithmetic genus zero and is thus rational.  Now, if $h^0(\cO(D_i-D_j)) > 1$ then $D$ must have a moving component and $X$ is covered by rational curves.  By the classification of surfaces, $X$ is a blow up of a ruled surface over a curve $C$.  Then Orlov's theorem on blowups \cite{Or92} implies that that the map $K_0(C) \to K_0(X)$ induced by derived pullback is injective.  However since $K_0(X)$ is torsion free and $K_0(C)$ has torsion unless $C = \P^1$, we find that $X$ is a blowup of a rational ruled surface and hence $X$ is rational.

So it remains to show that for some pair $i,j,$ $h^0(\cO(D_j - D_i)) > 1$.  By a Reimann-Roch computation due to Hille and Perling \cite[Lemma 3.3]{HP11}, if $h^0(\cO(D_j - D_i)), h^0(\cO(D_k - D_j)) > 0$ then 
\[ h^0(\cO(D_k - D_i) ) = h^0( \cO(D_k - D_j) ) + h^0( \cO(D_j - D_i ) ).\] 
Suppose, for a contradiction, that for all $i,j$, $h^0(\cO(D_j - D_i)) \leq 1$.  If $h^0(\cO(D_j - D_i)) \neq 0$ then for all $k$, $h^0(\cO(D_k - D_j))=0$.  Let $Q$ be the quiver with vertices $\{1,\dotsc,r\}$ and with a single edge $i \to j$ whenever $h^0(\cO(D_j - D_i)) = 1$.  Then $Q$ has no paths of length 2.  Now, we note that $\End(\cE)$ is isomorphic to a quotient of the path algebra $\bk Q$ where the kernel is contained in the span of the paths of length at least two.  Hence $\End(\cE) \cong \bk Q$.  However, the global dimension of $\bk Q$ is 1, while the minimum possible global dimension of $\End(\cE)$ is 2 (see \cite{BF}).  So we see that for at least one pair $i,j$, $h^0(\cO(D_j - D_i)) > 1$ and hence $X$ is rational.
\end{proof}

\begin{rmk}
There is another result in this direction.  Bondal and Polishchuk \cite{BoPo} have shown that if $X$ is a smooth $n$-dimensional projective variety that admits a full exceptional collection of length $n+1$ (the minimum possible length) then $X$ is a Fano variety.  
\end{rmk}

\begin{rmk}
The McKay correspondence may be viewed as a tilting equivalence.  Let $G \subset \SL_n(\C)$ be a finite subgroup.  Then $G$ acts on $S = \C[x_1,\dotsc,x_n]$ and we can form the twisted group ring $S \rtimes G$.  The category of $G$-equivariant sheaves on $\C^n$ is naturally equivalent to the category of left modules over $S \rtimes G$.  The equivariant sheaf corresponding to the free module $S \rtimes G$ is $\cO \tensor \C G$.  One can check for $n = 2,3$, the inverse equivalence $\bD(\C^n)^G \cong \bD(\cM_\theta)$ carries $\cO \tensor \C G$ to $\cF = p_{\cM_\theta *}\cE$, where $\cE$ is the universal $\theta$-stable $G$-constellation.  Since $\cE$ is flat over $\cM_\theta$ we see that $\cF$ is a tilting bundle on $\cM_\theta$.  The dual of a tilting bundle is also a tilting bundle.  So we can interpret $\Phi$ as $\bR\Hom( \cF^\vee,? )$.
\end{rmk}


\bibliographystyle{alpha}
\bibliography{tstructures.bib}
\end{document}